\newtheorem{lem}{Lemma}[section]
\newtheorem{theorem}[lem]{Theorem}
\newtheorem{prop}[lem]{Proposition}
\newtheorem{cor}[lem]{Corollary}
\newtheorem{claim*}{Claim}
\newtheorem{thm*}{Theorem}
\newtheorem{cor*}{Corollary}
\newtheorem{rmk}[lem]{Remark}
\newtheorem{thm}[lem]{Theorem}
\newtheorem{defn}[lem]{Definition}
\newtheorem{question}[lem]{Question}
\numberwithin{equation}{section}
\numberwithin{table}{section}
\def\p{\mathfrak{p}}
\def\q{\mathfrak{q}}
\def\Fpt{\mathbb{F}_p \left(t \right)}
\def\Gal{\text{Gal}}
\def\Fx{F \left( x \right)}
\def\vp{\varphi}
\def\vpx{\vp\left( x \right)}
\def\vpn{\vp^n}
\def\vpnx{\vpn\left( x \right)}
\def\a{\alpha}
\def\b{\beta}
\def\P1{\mathbb{P}^1}
\def\Eu{E^{\text{sep}}}
\def\oo{\mathfrak{o}}
\def\ff{\mathfrak{f}}
\begin{document}

\baselineskip=17pt

\title[A Hasse Principle for Periodic Points]{A Hasse Principle for Periodic Points}

\author{Adam Towsley}

\address{
Adam Towsley\\
Department of Mathematics\\
CUNY Graduate Center\\
365 5th Avenue\\
New York, NY 10016-4309\\
}

\email{atowsley@gc.cuny.edu}

\date{March 6, 2013}

\begin{abstract}
Let $F$ be a global field, let $\vp \in \Fx$ be a rational map of degree at least 2, and let $\a  \in F$. 
We say that $\a $ is periodic if $\vpn \left( \a \right) = \a$ for some $n \geq 1$. 
A Hasse principle is the idea, or hope, that a phenomenon which happens everywhere locally should happen globally as well. 
The principle is well known to be true in some situations and false in others. 
We show that a Hasse principle holds for periodic points, 
and further show that it is sufficient to know that $\a$ is periodic on residue fields for every prime in a set of natural density density 1 to know that $\a$ is periodic in $F$.
\end{abstract}

\subjclass[2010]{Primary 11S82, 37P05, 37P35}

\keywords{Arithmetic Dynamics, Periodic Points, Integrality}

\maketitle

%%%%%%%%%%%%%%%%%%%%%%%%%%%%%%%%%%%%%%%%%%%%%%%%%%%%%%%%%%%%%%%%%%%%%%%%%%%%%%%%
\section{Introduction}\label{sec:Introduction}%%%%%%%%%%%%%%%%%%%%%%%%%%%%%%%%%%%%%%%%%%
%%%%%%%%%%%%%%%%%%%%%%%%%%%%%%%%%%%%%%%%%%%%%%%%%%%%%%%%%%%%%%%%%%%%%%%%%%%%%%%%

Let $F$ be a \textbf{global field}, by which we mean a finite extension of $\mathbb{Q}$ or $\mathbb{F}_p \left( t \right)$. 
Let $\vp: \mathbb{P}^1\left(F \right) \rightarrow \mathbb{P}^1\left(F \right)$ be a rational map. 
For any integer $n \geq 0$, write $\vpnx = \vp \circ \cdots \circ \vpx$ for the $n$th iterate of $\vpx$ under composition. 
The \textit{forward orbit} of a point $\a \in F$ is defined to be $\mathcal{O}_\vp \left( \a \right) = \left\lbrace \a, \vp \left( \a \right), \vp^2 \left( \a \right), \dots \right\rbrace$. 
Similarly the \textit{strict forward orbit} of $\a$ is defined to be $\mathcal{O}^+_\vp \left( \a \right) = \left\lbrace \vp \left( \a \right), \vp^2 \left( \a \right), \dots \right\rbrace$ and 
the \textit{back orbit} of $\a$ is defined to be the set $\mathcal{O}_\vp^- \left( \a \right) = \left\lbrace  \b \in \mathbb{P}^1 \left(\overline{F}\right) : \vpn \left( \b \right) = \a \text{ for some }n \geq 1 \right\rbrace$. A point $\a $ is said to be \textit{periodic} if $\vpn \left( \a \right) = \a$ for some $n$, and more generally $\a$ is said to be \textit{preperiodic} if its forward orbit is finite. If $\a$ has an infinite forward orbit we say that $\a$ is a \textit{wandering point}. If the back orbit of $\a$ is finite then we say that $\a$ is an \textit{exceptional point}. 

For a prime ideal $\p $ there is a well defined `reduction mod $\p$' map 
$r_\p: \mathbb{P}^1\left(F \right) \rightarrow \mathbb{P}^1 \left( \ff_\p \right)$ where $\ff_\p$ is the residue field of $F$ at $\p$. 
For any $\a \in F$ denote $r_\p \left( \a \right)$ by $\overline{\a} \in \ff_\p$. 
A rational function $\vp \in \Fx$ can be written as $\vp \left( x,y \right) = \left[\vp_1 \left(x,y \right) ; \vp_2 \left( x,y \right) \right]$ 
where $\vp_1$ and $\vp_2$ are homogeneous polynomials. 
Denote by $\overline{\vp}$ the reduction of $\vp$ modulo $\p$, found by reducing the coefficients of $\vp_1$ and $\vp_2$ modulo $\p$. 
For all but finitely many primes $\overline{\vp}: \mathbb{P}^1 \left( \ff_\p \right) \rightarrow \mathbb{P}^1 \left( \ff_\p \right)$ is a morphism. 
Write $M_F^0$ for the set of finite places of $F$. 
Each $v \in M_F^0$ has a canonical identification with a unique prime ideal $\p_v \subseteq \oo_F$, the converse is true as well.
The use of $v$ and $\p_v$ will be interchangably throughout the paper. 

For each prime $\p$ the residue field $\ff_\p$ is finite. 
Therefore any $\overline{\a} \in \ff_\p$ must be either periodic or strictly preperiodic under $\overline{\vp} \in \Fx$. 
If $\overline{\a} \in \ff_\p$ is periodic we will say that $\a$ has \textit{periodic reduction} at $\p$. 
Note that we could also define the reduced orbit of $\a$ modulo $\p$ by taking the reduction of the ordered set of points 
$\mathcal{O}_\vp \left( \a \right)$ modulo $\p$. 
This definition corresponds to $\mathcal{O}_{\overline{\vp}} \left( \overline{\a } \right)$ if $\p$ is a prime of good reduction for $\vp$ 
and is well defined if $\p$ is a prime of bad reduction for $\vp$. 
If $\p$ is a prime of bad reduction for $\vp$ we will say $\a$ has periodic reduction at $\p$ 
if its reduced orbit is periodic under the second definition. 

Given any point $\a \in F$ one might ask if it is possible to determine if $\a$ is periodic in $F$ based on its reduction modulo $\p$ for various $\p$? 
In other words, can local information about periodicity give global information? 
To that end we prove the following theorem, a corollary of which can be thought of as a Hasse principle for periodic points.

\begin{thm*}\label{hasse}
For a global field $F$, rational map $\vp \in \Fx$ of degree at least 2, and point $\a \in F$ the following are equivalent:
\begin{itemize}
\item[(i)] $\a$ is periodic.
\item[(ii)] $\a$ has periodic reduction for every prime $\p$. 
\item[(iii)] $\a$ has periodic reduction for every prime $\p \in \mathcal{P}$, where $\mathcal{P}$ is a set of primes with natural density 1. 
\end{itemize}
\end{thm*}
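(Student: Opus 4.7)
The implications (i)$\Rightarrow$(ii) and (ii)$\Rightarrow$(iii) are immediate: a global relation $\vpn\left(\a\right) = \a$ reduces modulo every prime, and ``every prime'' is certainly contained in any density-$1$ set. All the substance lies in (iii)$\Rightarrow$(i), which I will prove by contraposition --- assuming $\a$ is not periodic, I show that the set of primes at which $\a$ has periodic reduction cannot have natural density $1$.

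I split into two cases according to the forward orbit of $\a$. If $\a$ is strictly preperiodic, then $\mathcal{O}_\vp\left(\a\right)$ is finite and the elements $\vpn\left(\a\right) - \a$ for $n \geq 1$ take only finitely many, all nonzero, values in $F$ (nonzero precisely because $\a$ is not periodic). Since any nonzero element of a global field has only finitely many prime divisors, only finitely many primes $\p$ can satisfy $\overline{\a} = \overline{\vpn\left(\a\right)}$ for some $n$; absorbing the finitely many primes of bad reduction, the set of periodic-reduction primes is then finite, hence cannot be of density $1$.

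The wandering case is the substantive one. Since $\a$ is not preperiodic and $\deg \vp \geq 2$, $\a$ is automatically non-exceptional (exceptional points of a degree-$\geq 2$ map are forced to be preperiodic, as the exceptional set is finite and $\vp$-invariant). Silverman's theorem on integer points in orbits of rational maps (\emph{Duke Math.\ J.}, 1993) then applies with both starting point and target equal to $\a$: for every finite set $S$ of places of $F$, only finitely many iterates $\vpn\left(\a\right)$ are $S$-integral with respect to $\a$. Equivalently, for all but finitely many $n \geq 1$ there exists a place $v \notin S$ with $v\left(\vpn\left(\a\right) - \a\right) > 0$. My plan is to combine this finiteness-in-$n$ statement with the product formula and canonical-height growth $h\left(\vpn\left(\a\right)\right) \asymp d^n$ to control the counting function of the set $\bigcup_n \{v : v\left(\vpn\left(\a\right) - \a\right) > 0\}$ --- which is exactly the set of periodic-reduction primes --- and conclude that this union cannot have natural density $1$.

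The main obstacle is precisely the last step of the wandering case: Silverman gives a finiteness statement in the iteration variable $n$, but what is needed is a density statement in the prime variable. The technical crux I anticipate is deploying the height bound $\sum_v \max\{0, v\left(\vpn\left(\a\right) - \a\right)\} \log N(v) \leq h\left(\vpn\left(\a\right) - \a\right) + O(1)$ in conjunction with Silverman's input to estimate $\#\left\{v : N(v) \leq X,\ \exists\, n \geq 1 \text{ with } v\left(\vpn\left(\a\right) - \a\right) > 0\right\}$, and show that this count is $o\left(X/\log X\right)$. Such an estimate (density $0$ is in fact expected, and density $<1$ suffices) would directly contradict the density-$1$ hypothesis and complete the proof.
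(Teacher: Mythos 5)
Your reduction to the contrapositive of (iii)$\Rightarrow$(i) and your handling of the strictly preperiodic case are fine and agree with the paper. The wandering case, however, contains a genuine gap, and it sits exactly where you flag ``the technical crux.'' First, the input: Silverman's integrality theorem is a Roth-type result and is unavailable when $F$ is a function field of positive characteristic; this is precisely why the paper proves Theorem \ref{integrality} by Runge's method instead, and that substitute requires the target point to be \emph{periodic} and non-exceptional, so it cannot be invoked with target $\a$ when $\a$ wanders. Second, and more seriously, the counting step fails even over number fields. For a prime $v$ of good reduction with $N(v)\le X$, periodic reduction of $\a$ at $v$ is witnessed by some exponent $n$ that can be as large as roughly $N(v)$, so counting the periodic-reduction primes of norm at most $X$ forces you to take the union over $n$ up to about $X$; the height bound yields at most $O\bigl(h\left(\vpn\left(\a\right)-\a\right)\bigr)=O\left(d^n\right)$ primes in the support for each fixed $n$, and $\sum_{n\le X}d^n$ is vastly larger than $X/\log X$. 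No bound of the form $o\left(X/\log X\right)$, nor even density strictly less than $1$, follows from these inputs. Worse, Silverman's theorem pushes in the opposite direction: it shows that for all but finitely many $n$ some new prime divides $\vpn\left(\a\right)-\a$, i.e.\ it \emph{produces} infinitely many primes of periodic reduction (this is Corollary \ref{Runge} in the paper); it does not produce primes of non-periodic reduction.

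The paper closes this gap by an entirely different mechanism, namely Chebotarev. One first finds a single good prime at which $\a$ has non-periodic reduction (Corollary \ref{notperiodic}, whose proof applies the integrality statement to a periodic non-exceptional point $\b$, not to $\a$ itself), then analyzes ramification in the splitting field of $\vp^M\left(x\right)-\a$ (Lemmas \ref{RamPoints} and \ref{Ramification}) and applies the Chebotarev density theorem to obtain a positive-density set of primes whose Frobenius fixes no preimage of $\a$; for such primes $\vpn\left(\a\right)\not\equiv\a \bmod \p$ for every $n$. This is Theorem \ref{nfintersection} (Benedetto--Ghioca--Hutz--Kurlberg--Scanlon--Tucker) for number fields and Theorem \ref{ffintersection} for function fields, applied with $\b=\a$, and the resulting positive-density complement is what contradicts the density-$1$ hypothesis. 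Your outline contains no substitute for this Galois-theoretic step, so as written the proof is incomplete.
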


\begin{cor*}[A Hasse Principle for Periodic Points]\label{Cor:hasse}
 Given the same setup as the theorem, $\a$ is periodic in $F$ if and only if it is periodic in $F_\p$ (the completion of $F$ at $\p$) for every $\p$.
\end{cor*}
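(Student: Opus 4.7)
The plan is to deduce the Corollary directly from the Theorem stated above. The ``only if'' direction is essentially a tautology: if $\vpn(\a) = \a$ holds in $F$ for some $n \geq 1$, then the same identity holds in every overfield of $F$, and so $\a$ is periodic in every completion $F_\p$.

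For the converse, my strategy is to translate local periodicity into periodic reduction so that part (iii) of the Theorem applies. Let $S$ denote the finite set consisting of the primes of bad reduction for $\vp$ together with the finitely many primes at which $\a$ fails to be $\p$-integral. For any prime $\p \notin S$, reduction modulo $\p$ commutes with iteration of $\vp$, so if $\vpn(\a) = \a$ as an equation in $F_\p$ for some $n \geq 1$, then reducing both sides modulo $\p$ yields $\overline{\vp}^{\,n}(\overline{\a}) = \overline{\a}$ in the residue field $\ff_\p$. Hence $\a$ has periodic reduction at every such $\p$.

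Assuming now that $\a$ is periodic in $F_\p$ for every $\p$, the preceding step shows that $\a$ has periodic reduction at every $\p \notin S$. Since $S$ is finite, these primes form a set of natural density $1$, so condition (iii) of the Theorem is satisfied, and the Theorem yields that $\a$ is periodic in $F$. The only subtle point in this plan is the compatibility of ``periodic in $F_\p$'' with ``periodic reduction at $\p$'' in the sense used by the Theorem; this is exactly what the discussion preceding the Theorem establishes at good-reduction primes, and since a density-$1$ set of primes is enough, no separate argument at bad primes is needed.
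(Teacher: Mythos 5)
Your proof is correct and follows essentially the same route as the paper: the forward direction is immediate, and the converse passes from periodicity in $F_\p$ to periodic reduction in the residue field and then invokes the Theorem. The only difference is that you discard the finitely many bad-reduction (and non-integral) primes and appeal to condition (iii), whereas the paper appeals to periodic reduction at every prime via its extended definition of reduced orbits at bad primes; your variant is a slightly more careful version of the same argument.
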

\begin{proof}
 The proof is trivial. If $\a$ is periodic in $F$ then it is periodic in $F_\p$ for each $\p$. 
 If $\a$ is periodic in $F_\p$ for every prime $\p$, then it must be periodic in the residue field of $F_\p$ so $\a$ must be periodic in $F$ by the theorem.
\end{proof}

Clearly $(i) \Rightarrow (ii) \Rightarrow (iii)$ so the only work involved is proving $(iii) \Rightarrow (i)$. 
To do this for number fields we use the following theorem of Benedetto, Ghioca, Hutz, Kurlberg, Scanlon, and Tucker from \cite{BGHKST}.

\begin{thm}[Benedetto, Ghioca, Hutz, Kurlberg, Scanlon, and Tucker]\label{nfintersection}
If $F$ is a number field, $\vp \in \Fx$ a rational map of degree at least 2, and $\a, \b \in F$ are points such that $\b \notin \mathcal{O}_\vp^+ \left( \a \right)$ then there is a set of primes $\mathcal{P}$ with positive density such that $\overline{\b} \notin \mathcal{O}^+_{\overline{\vp}} \left( \overline{\a} \right)$ for every $\p \in \mathcal{P}$.
\end{thm}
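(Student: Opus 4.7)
The plan is to prove the contrapositive via a case split on whether $\a$ is preperiodic; the preperiodic case is routine and the wandering case is the technical heart.

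If $\a$ is preperiodic, the strict forward orbit $\mathcal{O}_\vp^+(\a) = \{\g_1,\ldots,\g_N\}$ is a finite subset of $F$ with $\b \ne \g_i$ for every $i$. Each nonzero difference $\b - \g_i$ has only finitely many prime divisors in $F$, so removing those together with the finitely many primes of bad reduction deletes only a density-zero set of primes. At every remaining $\p$ one has $\bar\b \ne \bar\g_i$ for all $i$, and because $\mathcal{O}^+_{\bar\vp}(\bar\a) \subseteq \{\bar\g_1,\ldots,\bar\g_N\}$ this forces $\bar\b \notin \mathcal{O}^+_{\bar\vp}(\bar\a)$ on a density-one set --- far more than the conclusion demands.

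Now assume $\a$ is wandering and set $u_n := \vp^n(\a) - \b \in F^{\times}$. At a good-reduction prime $\p$, the condition $\bar\b \in \mathcal{O}^+_{\bar\vp}(\bar\a)$ is equivalent to $\p \mid u_n$ for some $n$ with $1 \le n \le |\ff_\p|$, since the orbit of $\bar\a$ in $\mathbb{P}^1(\ff_\p)$ is eventually periodic with preperiod plus period at most $|\ff_\p|+1$. Writing $\mathcal{B}$ for this ``bad'' set, the goal is to show $\mathcal{B}$ has density strictly less than one. The main input is Silverman's dynamical Siegel theorem on integer points in orbits: for $\a$ wandering and $\b$ non-exceptional, for each place $v$ and each $\epsilon > 0$,
\begin{equation*}
  \log^+ |u_n|_v^{-1} \;\le\; \epsilon\, h\bigl(\vp^n(\a)\bigr) \qquad \text{for all } n \gg 0,
\end{equation*}
together with the canonical height growth $h(\vp^n(\a)) = d^n \hat h_\vp(\a) + O(1)$, where $\hat h_\vp(\a) > 0$ because $\a$ is wandering. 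A counting argument then weighs the prime-factor capacity $\sum_{\p \in \mathcal{B},\, |\ff_\p| \le X} \log |\ff_\p|$ against the height budget $\sum_{n \le X} h(u_n) = O(d^X)$, combined with a bound showing that for a density-one set of primes the period of $\bar\a$ modulo $\p$ is not too small, to conclude that $\mathcal{B}$ cannot have density one. The corner case where $\b$ is exceptional for $\vp$ (so $\mathcal{O}^-_\vp(\b)$ is finite, consisting of at most two totally ramified periodic points) is handled separately: pass to an iterate $\vp^k$ fixing $\b$, conjugate so $\b$ lies at $\infty$, and apply the Siegel estimate in its polynomial form.

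The principal obstacle lies entirely in the wandering case: extracting a genuine density bound from Silverman's qualitative estimate. The naive count --- every $\p$ of norm around $X$ divides one of $u_1,\ldots,u_X$, whose combined height is only $O(d^X)$ --- is useless because $d^X$ dwarfs the prime-counting function. One must simultaneously exploit the Siegel-type spreading of the prime divisors of each $u_n$ across many places and a density-one lower bound on orbit periods modulo $\p$; only in combination do these prevent $\mathcal{B}$ from absorbing all but a density-zero set of primes.
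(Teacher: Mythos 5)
Your preperiodic case is fine, but the wandering case --- which you yourself identify as the heart of the matter --- is not proved, and the strategy you sketch for it does not close. You invoke two inputs. First, the Siegel/Silverman estimate $\log^+ \vert u_n \vert_v^{-1} \le \epsilon\, h\left(\vp^n\left(\a\right)\right)$ holds only for a \emph{fixed} place $v$ (or a fixed finite set of places), so it gives no control over the primes that actually drive your count, namely the large, $n$-dependent primes dividing $u_n$, where essentially all of the height $h\left(u_n\right) \asymp d^n$ sits; there is no ``spreading of prime divisors across many places'' to be extracted from it. Second, the ``density-one lower bound on orbit periods modulo $\p$'' you appeal to is not available in anything like the needed strength: to beat the budget $\sum_{n \le T} h\left(u_n\right) \gg d^T$ against $\sum_{N\p \le X} \log N\p \asymp X$ you would need that for almost all $\p$ the first $n$ with $\p \mid u_n$ is $O\left(\log N\p\right)$ with a small constant, whereas no such bound is known (heuristically the hitting time is of size about $\sqrt{N\p}$). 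You concede exactly this in your last paragraph, so what you have is a plan with its central step missing; moreover, bounding the density of the bad set $\mathcal{B}$ by height counting would amount to a quantitative statement substantially stronger than the theorem, which is not how the result is obtained.

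For comparison: this theorem is quoted from \cite{BGHKST}, and the present paper reproduces its architecture in the function-field analog (Lemmas \ref{RamPoints}, \ref{Ramification}, and \ref{big}). There, integrality enters only to produce a \emph{single} prime of good, separable reduction at which $\a$ has non-periodic reduction. A critical-point/ramification lemma then shows that, over a suitable finite extension and for all large $M$, the splitting field of $\vp^M\left(x\right) = \b$ is unramified at that prime and its Frobenius fixes no solution of $\vp^M\left(x\right) = \b$; Chebotarev's density theorem then yields a positive-density set of primes whose Frobenius fixes no such solution, and a short argument (discarding finitely many primes according to whether $\b$ is preperiodic or not) shows that $\overline{\b} \notin \mathcal{O}^+_{\overline{\vp}}\left(\overline{\a}\right)$ for every prime in that set. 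In other words, the positive-density set is \emph{constructed} via a Frobenius condition on preimages of $\b$, rather than obtained by showing the complementary set has density less than one through counting. If you want to repair your write-up, that Galois-theoretic route --- non-periodic reduction at one prime, ramification control, Chebotarev --- is the one to take.
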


To complete the proof of Theorem \ref{hasse} when $F$ is a function field we prove Theorem \ref{ffintersection}, the analog of Theorem \ref{nfintersection}. A key element of Theorem \ref{nfintersection} is that if $\a$ wanders then it will have non-periodic reduction for infinitely many primes. For a number field $F$ this follows easily from the following theorem of Silverman which appears in \cite{Silverman93}.

\begin{theorem}[Silverman]\label{Silverman}
If $F$ is a number field, $\vpx \in \Fx$ is a rational map of degree at least 2, and $\a, \b \in F$ are points such that $\b$ is not exceptional then only finitely many elements of $\mathcal{O}_\vp \left( \a \right)$ are $S$-integral to $\b$. 
\end{theorem}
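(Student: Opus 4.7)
The plan is to deduce the theorem from Siegel's theorem on integral points on $\P1$ minus three points (a consequence of Roth's theorem), applied to an iterated preimage of $\b$. First I would reduce to the case that $\a$ is a wandering point: if $\a$ is preperiodic then $\mathcal{O}_\vp(\a)$ is finite and there is nothing to prove. When $\a$ wanders, the canonical height construction gives $h(\vpnx) = d^n \hat h_\vp(\a) + O(1)$ with $\hat h_\vp(\a) > 0$, so in particular $h(\vpnx) \to \infty$.

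Since $\b$ is not exceptional, the total backward orbit $\bigcup_{n \geq 1} \vp^{-n}(\b) \subset \P1(\overline F)$ is infinite, so I can fix $N$ large enough that $\vp^{-N}(\b)$ contains three distinct points $\g_1, \g_2, \g_3$, and pass to a finite extension $L/F$ containing them. The key technical step is a pullback estimate: after enlarging $S$ to a finite set $T \supseteq S$ that absorbs the primes of bad reduction of $\vp$ together with those where the $\g_i$ coalesce with each other or with $\b$ modulo $v$, one shows that if $\vp^n(\a)$ is $S$-integral to $\b$ then $\vp^{n-N}(\a)$ is $T$-integral to each $\g_i$ (viewed in $L$). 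This is purely local: at each $v \notin T$, the map $\vp^N$ has good reduction at $v$, so the $v$-adic chordal distance $\rho_v(\vp^{n-N}(\a), \g_i)$ being small would force $\rho_v(\vp^n(\a), \b)$ to be small, contradicting $S$-integrality.

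To conclude, suppose for contradiction that infinitely many $\vp^m(\a)$ are $S$-integral to $\b$. Setting $x_m = \vp^{m-N}(\a) \in L$ gives infinitely many $x_m$ of unbounded height, each simultaneously $T$-integral to $\g_1$, $\g_2$, and $\g_3$. This contradicts Siegel's theorem for $\P1 \setminus \{\g_1, \g_2, \g_3\}$, which asserts that there are only finitely many $T$-integral points in $L$ on a thrice-punctured line.

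The main technical obstacle is the pullback estimate of the second paragraph: making the implicit constants uniform in $n$ so that they do not spoil the integrality inequality on $x_m$. This is handled by a careful good-reduction analysis of the morphism $\vp^N$ at each place $v \notin T$ (using that $\vp^N \colon \P1 \to \P1$ is then a well-defined morphism of the reductions, with multiplicities at the $\g_i$ that sum to the degree), or equivalently by the functional equation for the canonical local heights attached to $\vp$ and $\b$. Once this local bookkeeping is in place, the heights-and-Siegel step is routine.
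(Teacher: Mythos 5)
Your argument is correct, but note that the paper does not actually prove this statement anywhere: it is quoted verbatim as a theorem of Silverman from \cite{Silverman93}, so there is no internal proof to match. What you have written is essentially the standard proof of Silverman's theorem (and close in spirit to his original one): reduce to $\a$ wandering, use non-exceptionality plus Riemann--Hurwitz to find $N$ with $\#\vp^{-N}(\b)\geq 3$, observe that at a place of good reduction outside an enlarged set $T$ the reduction of $\vp^N$ commutes with reduction of points, so $S$-integrality of $\vp^m(\a)$ to $\b$ pulls back to $T$-integrality of $\vp^{m-N}(\a)$ to $\{\g_1,\g_2,\g_3\}$, and then invoke Siegel's theorem for $\P1$ minus three points. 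Your only over-caution is the worry about constants uniform in $n$: since $N$ is fixed and all archimedean places are absorbed into $T$, the local pullback step at each $v\notin T$ is an exact statement about reductions ($\delta_v(x,\g_i)<1$ forces $\delta_v(\vp^N(x),\b)<1$), with no $n$-dependence at all. It is worth contrasting your route with the paper's own Theorem on integrality, which is the analogous statement the author actually proves: because Roth/Siegel fails in positive characteristic, the paper instead assumes $\b$ is \emph{periodic} and runs Runge's method, using periodicity to manufacture more irreducible factors of the numerator of $\vp^{k(r+1)}(x)-\b$ than there are places in $S$ (Lemma \ref{Runge_proof}). Your Siegel-based argument works for arbitrary non-exceptional $\b$ but only over number fields; the paper's Runge argument works over all global fields but needs $\b$ periodic. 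The two are genuinely different methods with complementary ranges of applicability.
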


We cannot use Silverman's Theorem when $F$ is a function field since the proof requires Roth's Theorem, which is false for fields with positive characteristic (see section 6.2 in \cite{Bombieri}).  Thus we prove the following similar theorem.

\begin{thm*}\label{integrality}
If $F$ is a global field, $\vp \in \Fx$ is a rational map of degree at least 2, and $\a, \b \in \mathbb{P}^1 \left( F \right)$ are points such that $\b$ is periodic and not exceptional then only finitely many elements of $\mathcal{O}_\vp \left( \a\right) $ are $S$-integral to $\b$. 
\end{thm*}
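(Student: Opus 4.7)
The number field case of Theorem~\ref{integrality} follows immediately from Silverman's Theorem~\ref{Silverman}, since a periodic non-exceptional point of a map of degree at least $2$ satisfies the hypothesis of that theorem. The real content lies in the function field case, where Roth's theorem, and hence Silverman's argument, is unavailable.

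For the function field case I would begin with two standard reductions. First, letting $n$ be the period of $\b$, replace $\vp$ by $\vp^n$ so that $\b$ becomes a fixed point; this is harmless because $\mathcal{O}_\vp(\a)$ decomposes as a finite union of orbits under $\vp^n$ and non-exceptionality is preserved under iteration. Second, conjugate by a M\"obius transformation over $F$ to place $\b$ at $\infty$, so that ``$S$-integral to $\b$'' becomes ``lies in $\oo_{F,S}$''. Non-exceptionality of $\infty$ forces $\vp$, written in lowest terms $P/Q$, to satisfy $\deg Q \geq 1$, so there is a pole $\gamma \in \overline{F} \setminus \{\infty\}$. After a finite extension of $F$ and enlargement of $S$, I may assume $\gamma \in \oo_{F,S}$ and that $S$ contains all archimedean places and all places of bad reduction for $\vp$.

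The core argument proceeds by contradiction: assume $\vp^{k_j}(\a) \in \oo_{F,S}$ for an infinite sequence $k_j \to \infty$. The preperiodic case being trivial, assume $\a$ wanders, so $h(\vp^k(\a)) = d^k h(\a) + O(1)$ where $d = \deg(\vp) \geq 2$. At each good-reduction place $v \notin S$, reduction is functorial, giving $\overline{\vp^{k_j}(\a)} = \overline{\vp}^{k_j}(\overline{\a})$, and integrality at $v$ translates to $\overline{\vp}^{k_j}(\overline{\a}) \neq \overline{\infty}$, so $\overline{\a} \notin \overline{\vp}^{-k_j}(\overline{\b})$ in $\mathbb{P}^1(\overline{\ff_v})$. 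Since this holds for infinitely many $k_j$, the reduction $\overline{\a}$ must avoid the entire backward orbit of $\overline{\b}$ under $\overline{\vp}$ at every good-reduction place $v \notin S$.

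The main obstacle, and the essence of the function field case, is turning this avoidance into a quantitative contradiction. The plan is to combine the exponential growth of $h(\vp^{k_j}(\a))$ with a function-field height inequality, such as the Mason--Stothers abc inequality or a direct divisor-counting argument applied to the pole divisor of $\vp^k$, whose support concentrates on the backward orbit of $\b$ and is accessible because $\b$ is non-exceptional with $F$-rational preimage $\gamma$. This should yield an upper bound on $h(\vp^{k_j}(\a))$ growing only linearly in $k_j$, contradicting the exponential lower bound. The periodic hypothesis on $\b$ is essential throughout because it enables the Step~1 reduction to a fixed point, which in turn makes the local structure of $\vp$ near $\b$ explicit enough for function-field methods to replace the missing appeal to Roth.
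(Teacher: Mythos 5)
Your preliminary reductions (pass to $\vp^k$ so that $\b$ is fixed, move $\b$ to $\infty$, note that non-exceptionality forces a finite pole, enlarge $S$) are all sound, and invoking Silverman's theorem for the number-field case is legitimate. But the function-field case is where the entire content of the theorem lies, and your proposal stops exactly there: the passage from ``$\vp^{k_j}(\a)$ is $S$-integral for infinitely many $k_j$'' to a height bound contradicting $h(\vp^{k}(\a)) = d^k h(\a) + O(1)$ is described only as a ``plan'' that ``should yield'' a linear upper bound via Mason--Stothers or divisor counting. This is a genuine gap, not a routine verification. $S$-integrality of $\vp^{k}(\a)$ says only that its pole divisor is supported on the finitely many places of $S$; since poles may have arbitrarily high multiplicity there, support alone gives no bound on the height, and it is not at all clear how to set up a Mason--Stothers identity $a+b=c$ from the data at hand. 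This missing step is precisely the point where Silverman needs Roth over number fields, so gesturing at an unspecified ``function-field height inequality'' does not close it.

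The paper's actual mechanism is Runge's method, and it is uniform over all global fields. One uses periodicity and non-exceptionality of $\b$ to produce, for $r = |S|$, at least $r+1$ distinct irreducible factors $g_1,\dots,g_{r+1} \in F[x]$ of the numerator of $\vp^{k(r+1)}(x)-\b$ (take $\b_1 \in \vp^{-k}(\b)$ off the cycle, then $\b_2 \in \vp^{-k}(\b_1)$, etc.; periodicity of $\b$ is what makes each $g_i$ divide the numerator of the single function $\vp^{k(r+1)}(x)-\b$). Then for each $v \in S$ at most one $g_i$ can have $|g_i(\g)|_v$ small (Proposition \ref{bound}), so by pigeonhole some $g_j$ satisfies $|g_j(\g)|_v \geq C_v$ for every $v \in S$ simultaneously; combined with Lemma \ref{S_Int} and the product formula this bounds $h(g_j(\g))$, hence $h(\g)$, for every $\g$ that is $S$-integral to the vanishing set, and Northcott finishes. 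Your proposal contains none of this counting of irreducible factors against $|S|$, which is the key idea; without it, or a worked-out substitute, the argument does not go through.
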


As a corollary to Theorem \ref{integrality} we are able to prove that if $\a \in F$ is not periodic then there are infinitely many primes $\p$ for which $\a$ has non-periodic reduction. To prove Theorem \ref{integrality} we will first prove Lemma \ref{Runge_proof} using Runge's method. Runge developed the method in the 1880's in \cite{Runge}. In 1983, almost 100 years later, it was generalized by Bombieri in \cite{BombieriRunge}. More recently Runge's method has been used by Levin in \cite{Levin} and in a dynamical setting by Corvaja, Sookdeo, Tucker, and Zannier in \cite{CSTZ}. We will then apply Theorem \ref{integrality} to recover several necessary dynamical results.

In Section \ref{sec:Prelims} we give a definition of global fields and describe the important Northcott property, 
we also develop the notion of integrality. 
In Section \ref{sec:Integrality} we prove a finiteness lemma which we apply to a dynamical setting in order 
to prove Theorem \ref{integrality}. 
In Section \ref{sec:Interesect} we then apply Theorem \ref{integrality} to prove Theorem \ref{ffintersection} 
which is the function field analog of Theorem \ref{nfintersection}. 
Finally, in Section \ref{sec:Hasse} we use Theorems \ref{nfintersection} and \ref{ffintersection} to prove Theorem \ref{hasse}. 
Having proved Theorem \ref{hasse} we discuss the possibility of strengthening it 
and the necessary conditions which would need to be applied. 

\textit{Acknowledgments}: The author would like to thank Tom Tucker and Xander Faber for their help preparing this paper.

%%%%%%%%%%%%%%%%%%%%%%%%%%%%%%%%%%%%%%%%%%%%%%%%%%%%%%%%%%%%%%%%%%%%%%%%%%%%%%%
\section{Preliminaries}\label{sec:Prelims}%%%%%%%%%%%%%%%%%%%%%%%%%%%%%%%%%%%%%%%%%%
%%%%%%%%%%%%%%%%%%%%%%%%%%%%%%%%%%%%%%%%%%%%%%%%%%%%%%%%%%%%%%%%%%%%%%%%%%%%%%%%
We now develop some of the necessary preliminary materials for proving Theorem. \ref{integrality}
\subsection{Global Fields and the Northcott Property}
In the introduction we made the following definition.
\begin{defn}\label{globalfield}
If $F$ is a finite extension of $\mathbb{Q}$ or $\Fpt$ then we say that $F$ is a \textbf{global field}.
\end{defn}
These types of fields are distinguished for two reasons; they have finite residue fields and they admit height functions which
have the Northcott property which we will state below as a theorem.

A global field $F$ is equipped with a set $M_F$ of inequivalent non-trivial absolute values $\left\vert \cdot \right\vert_v$ which satisfy the following two properties for any $\a \in F\setminus \left\lbrace 0 \right\rbrace$.
\begin{enumerate}
 \item $\left\vert \a \right\vert_v =1$ for all but finitely many $v \in M_F$, and
 \item $\displaystyle\prod_{v \in M_F} \left\vert \a \right\vert_v = 1 $. (The product formula). 
\end{enumerate}
Let $F$ be a global field and $\overline{F}$ be a fixed algebraic closure of $F$. 
The absolute logarithmic global height function $h:\mathbb{P}^1\left(\overline{F} \right) \rightarrow \mathbb{R}_{\geq 0 }$ is defined in the standard way.
Let $\a = \left[ \a_0 : \a_1 \right] \in \mathbb{P}^1 \left( \overline{F} \right)$ be an element of the finite extension $F^\prime / F$. 
The height of $\a$ is given by 
$$ h \left( \a \right) = \sum_{w \in M_{F^\prime}} \log \max \left\lbrace \left\vert \a_0 \right\vert_w, \left\vert \a_1 \right\vert_2 \right\rbrace.$$
Because $F$ satisfies the product formula $h \left( \a \right)$ is defined independently of the choice of coordinates of $\a$. 
The normalization of elements of $M_{F^\prime}$ are arranged so that $h\left( \a \right)$ is independent of the choice of field $F^\prime$ containing $\a$.
These definitions follow the notation and conventions of \cite[\S1]{Bombieri}, which is an excellent source for a reader looking for more details. 

As mentioned above, heights on global fields satisfy the Northcott property which we state formally here. 
\begin{thm}[The Northcott Property]\label{Nortcott}
If $F$ is a global field, and $N$ is any positive real number then there are only finitely many $\a \in F$ such that $h \left( \a \right) < N$. 
\end{thm}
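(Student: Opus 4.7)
The plan is to reduce the problem to the prime global field $K$ (either $\mathbb{Q}$ or $\mathbb{F}_p(t)$) by passing to minimal polynomials, and then handle $K$ directly by an explicit description of its elements.

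First I would verify the base case $F = K$. For $\alpha = a/b \in \mathbb{Q}$ in lowest terms, the product formula together with the standard normalization of places yields $h(\alpha) = \log \max\{|a|, |b|\}$, so the bound $h(\alpha) < N$ forces $|a|, |b| < e^N$ and produces only finitely many such fractions. For $\alpha = P(t)/Q(t) \in \mathbb{F}_p(t)$ with $\gcd(P,Q) = 1$, the analogous computation, summing the contributions from the places of $\mathbb{F}_p(t)$ (monic irreducibles together with the place at infinity), shows that $h(\alpha) = (\log p)\max\{\deg P, \deg Q\}$. Since $\mathbb{F}_p[t]$ contains only finitely many polynomials of any bounded degree, only finitely many such fractions satisfy $h(\alpha) < N$.

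Next, for a general global field $F$ with $d = [F : K]$ and $\alpha \in F$, I would consider the monic minimal polynomial
$$m_\alpha(x) = \prod_{\sigma}\bigl(x - \sigma(\alpha)\bigr) \in K[x],$$
whose degree is at most $d$ and whose coefficients are elementary symmetric functions of the $\Gal(\overline{K}/K)$-conjugates of $\alpha$. Because the absolute height is Galois-invariant, each conjugate has height equal to $h(\alpha)$. Applying the standard height inequalities $h(\alpha\beta) \leq h(\alpha) + h(\beta)$ and $h(\alpha + \beta) \leq h(\alpha) + h(\beta) + \log 2$ (the $\log 2$ correction being required only at archimedean places), I would bound the height of each coefficient of $m_\alpha$ by a quantity depending only on $d$ and $h(\alpha)$.

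The base case then applies to each coefficient of $m_\alpha$, so only finitely many such minimal polynomials arise as $\alpha$ ranges over elements of $F$ with $h(\alpha) < N$. Since each such polynomial has at most $d$ roots in $F$, the total number of $\alpha$ in question is finite. The main technical step is the symmetric-function height estimate, which behaves slightly differently at archimedean and non-archimedean places but is routine in both cases; this is exactly the classical Northcott argument presented in \cite[\S1.6]{Bombieri}, and I would cite it rather than reproduce the coefficient bound in full.
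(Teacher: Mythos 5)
Your argument is the classical Northcott proof---pass to the minimal polynomial over the prime field $K$, bound the heights of its coefficients via the elementary symmetric functions of the conjugates, and finish with an explicit count over $\mathbb{Q}$ or $\mathbb{F}_p(t)$---and it is essentially correct, but it is not the route the paper takes. The paper gives no argument for the number field case beyond a citation (to Silverman's book, where the proof is exactly yours), and for function fields it invokes a different, more geometric one-liner: over a finite field of constants there are only finitely many elements of $F$ of bounded degree, i.e.\ finitely many maps of bounded degree from the curve with function field $F$ to $\mathbb{P}^1$, because the relevant Riemann--Roch spaces are finite-dimensional over a finite field. Your reduction to $\mathbb{F}_p(t)$ via minimal polynomials buys a uniform, self-contained treatment of both cases at the cost of the symmetric-function height estimates; the paper's function-field observation is shorter but leans on the geometric interpretation of the height.

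One caveat in positive characteristic: the paper's global fields are arbitrary finite extensions of $\mathbb{F}_p(t)$, so $F/K$ need not be separable, and then $\prod_{\sigma}\left(x - \sigma(\alpha)\right)$ is \emph{not} the minimal polynomial of $\alpha$ --- it has degree only the separable degree of $K(\alpha)/K$, and its coefficients need not lie in $K$ (e.g.\ for $\alpha$ purely inseparable the product is just $x-\alpha$). This is easily repaired: replace $\alpha$ by $\alpha^{q}$, where $q$ is the inseparable degree of $K(\alpha)/K$; then $\alpha^{q}$ is separable over $K$, one has $h\left(\alpha^{q}\right) = q\,h(\alpha)$, and $x \mapsto x^{q}$ is injective in characteristic $p$, so finiteness for the $\alpha^{q}$ yields finiteness for the $\alpha$. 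With that adjustment your proof is complete.
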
 

\begin{proof}
If $F$ is a number field this is well known, for example see \cite{SilvermanDynamicsBook}. If $F$ is a function field with a finite field of constants then the Northcott property follows from the fact that there are only finitely many elements $\a \in F$ of bounded degree. 
\end{proof}

\subsection{$\left(D,S \right)$-Integers}\label{subsec:aSintegers}
Let $F$ be a global field, and let $S$ be a finite set of places of $F$ containing the infinite places. 
For any place $v$ of $F$ define the chordal metric on $\P1 \left( \mathbb{C}_v \right)$ as follows:
$$\delta_v \left( P,Q \right) = \frac{\left\vert x_1 y_2 - x_2 y_1 \right\vert_v}{\max \left\lbrace \left\vert x_1 \right\vert_v, \left\vert y_1\right\vert_v \right\rbrace \max \left\lbrace \left\vert x_2 \right\vert_v, \left\vert y_2\right\vert_v \right\rbrace},$$
where $P = \left[ x_1 :  y_1 \right]$, $Q = \left[x_2, y_2 \right]$. 
Note that $0 \leq \delta_v \left( \cdot, \cdot \right) \leq 1$. 
The ring of $S$-integers of $F$ is the set $\left\lbrace b \in F : \delta_v \left( b, \infty \right) = 1 \text{ for every } v \notin S\right\rbrace$. 
We can generalize this by replacing $\infty$ with an arbitrary point $a \in F$ and create the set of $\left(a, S \right)$-integers, $\left\lbrace b \in F : \delta_v \left( b, a \right) = 1 \text{ for every } v \notin S \right\rbrace$.
This can further be generalized by replacing the point $a$ with a finite set of points $D \subseteq \P1 \left( F \right)$ to create the $\left(D, S \right)$-integers, $\left\lbrace b \in F : \delta_v \left( b, d \right) = 1 \text{ for every } d \in D \text{ and every }v \notin S \right\rbrace$.
This interpretation of $\left(D,S\right)$-integers allows us to generalize to an algebraic closure $\overline{F}$ of $F$ by varying over all embeddings of $b$ and the points of $D$ into $\overline{F}$. 

Intuitively, $b $ is a $\left( D, S \right)$-integer if it avoids the elements of $D$ modulo $v$ for all places $v \notin S$. 
We now prove the following lemma which will allow us to compute the height of $\left(D,S\right)$-integers with a finite sum in Lemma \ref{Runge_proof}.

\begin{lem}\label{S_Int}
 Let $F$ be a global field, $f \in \Fx$ an irreducible polynomial, $D \subseteq \overline{F}$ the set of roots of $f$,
 $S$ a finite set of places containing the infinite places and any place which is a pole of a coefficient of $f$ and let $b \in F$ be $S$-integral to $D$.
 If $\left\vert f \left( b \right) \right\vert_v < 1 $ then $v \in S$. 
\end{lem}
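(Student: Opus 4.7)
The plan is to prove the contrapositive: for every $v\notin S$, show $|f(b)|_v\geq 1$. The key identity to derive is $|f(b)|_v = \|f\|_v\cdot\max(|b|_v,1)^n$, where $\|f\|_v$ denotes the Gauss norm of $f$ at $v$.

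First I would factor $f(x)=c\prod_{i=1}^{n}(x-d_i)$, where $c$ is the leading coefficient and $d_1,\dots,d_n\in D$ are the roots of $f$ in $\overline{F}$, and extend $v$ to a place of $\overline{F}$ (possible since $v$ is non-archimedean, as $S$ contains the archimedean places). Substituting,
$$|f(b)|_v \;=\; |c|_v\prod_{i=1}^n|b-d_i|_v.$$

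Next, I would extract each factor $|b-d_i|_v$ from the $S$-integrality hypothesis. Unwinding the definition of $\delta_v$ gives
$$|b-d_i|_v\;=\;\max(|b|_v,1)\,\max(|d_i|_v,1),$$
and a brief case check confirms this is consistent with the ultrametric inequality; crucially, the case $|b|_v>1$ and $|d_i|_v>1$ is automatically excluded, since then the strong triangle inequality forces $|b-d_i|_v\leq \max(|b|_v,|d_i|_v)<|b|_v|d_i|_v$, contradicting $\delta_v(b,d_i)=1$.

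Finally, I would apply the multiplicativity of the Gauss norm at non-archimedean places (Gauss's Lemma) to the factorization $f=c\prod(x-d_i)$, which gives
$$|c|_v\prod_{i=1}^n\max(|d_i|_v,1)\;=\;\|f\|_v\;:=\;\max_j|c_j|_v.$$
Combining the three ingredients yields $|f(b)|_v = \|f\|_v\cdot\max(|b|_v,1)^n \geq \|f\|_v$. The condition that $S$ contains every place that is a pole of a coefficient of $f$ means $\|f\|_v=1$ for $v\notin S$, and the desired inequality $|f(b)|_v\geq 1$ follows, establishing the contrapositive.

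The main subtlety is the second step: correctly deriving $|b-d_i|_v=\max(|b|_v,1)\max(|d_i|_v,1)$ and in particular ruling out the forbidden case $|b|_v>1, |d_i|_v>1$. Once that identity is in hand, the Gauss-norm identity is what transports the local computation into a statement controlled by the global structure of $f$ via the hypotheses on $S$.
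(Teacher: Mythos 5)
Your argument reaches the right conclusion but by a genuinely different, and in fact sharper, route than the paper's. The paper argues softly: writing $f(b)=c\prod_{\sigma\in G}\bigl(b-\sigma(d)\bigr)$, it deduces from $\left\vert f(b)\right\vert_v<1$ that some factor satisfies $\left\vert b-\sigma(d)\right\vert_v<1$, bounds $\delta_v\bigl(b,\sigma(d)\bigr)\leq\left\vert b-\sigma(d)\right\vert_v<1$, and concludes $v\in S$ from $S$-integrality. You instead prove the exact identity $\left\vert f(b)\right\vert_v=\Vert f\Vert_v\cdot\max\left(\left\vert b\right\vert_v,1\right)^{\deg f}$ by unwinding $\delta_v(b,d_i)=1$ into $\left\vert b-d_i\right\vert_v=\max\left(\left\vert b\right\vert_v,1\right)\max\left(\left\vert d_i\right\vert_v,1\right)$ (your exclusion of the case $\left\vert b\right\vert_v>1$, $\left\vert d_i\right\vert_v>1$ is exactly the subtlety to flag) and then invoking multiplicativity of the Gauss norm. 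Your version buys an effective identity and a clean contrapositive at the cost of an appeal to Gauss's lemma, which the paper avoids.

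One step deserves a caveat, though it afflicts the paper's own proof equally. The hypothesis that no $v\notin S$ is a pole of a coefficient of $f$ gives only $\Vert f\Vert_v\leq 1$, not $\Vert f\Vert_v=1$ as you assert: if $f$ has non-unit content at $v$, the final inequality $\left\vert f(b)\right\vert_v\geq\Vert f\Vert_v\geq 1$ fails at its second link. Indeed the lemma as literally stated is false in that case: take $F=\mathbb{Q}$, $f(x)=px+p$, $D=\{-1\}$, $S=\{\infty\}$, $b=0$; then $\delta_v(0,-1)=1$ for every finite $v$, yet $\left\vert f(0)\right\vert_p=p^{-1}<1$ with $p\notin S$. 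The paper's proof hides the identical defect in the unjustified claim that $\left\vert f(b)\right\vert_v<1$ forces some $\left\vert b-\sigma(d)\right\vert_v<1$, which requires $\left\vert c\right\vert_v\geq 1$. The fix is to normalize $f$ to have unit content, or to enlarge $S$ by the finitely many places dividing the content; in the only application (Lemma \ref{Runge_proof}) the discrepancy is a bounded term absorbed into the constant $C$, so nothing downstream breaks. Your identity has the virtue of making this defect visible rather than burying it.
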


\begin{proof}
Let $G$ be the Galois group of $f$ and let $d \in D$ be a root of $f$. 
For some non-zero $c \in F$ we can write $f \left( b \right) = c \cdot  \displaystyle\prod_{\sigma \in G} \left( b- \sigma \left( d \right) \right)$.
If $\left\vert f \left( b \right) \right\vert_v < 1 $ then there exists a $\sigma \in G$ such that $\left\vert b - \sigma \left( d \right) \right\vert_v <1$.

Assume that $b \neq \infty$. We write $b$ and $\sigma \left( d \right)$ projectively as $b = \left[ b; 1 \right]$ and $\sigma \left( d \right) = \left[ \sigma \left( d \right); 1 \right]$. 
By definition of the chordal metric we see that 
$$\delta_v \left( b, \sigma \left( d \right) \right) = \frac{\left\vert b - \sigma \left( d \right) \right\vert_v}{\max \left\lbrace \left\vert b \right\vert_v, \left\vert 1 \right\vert_v \right\rbrace \max \left\lbrace \left\vert \sigma \left( d \right) \right\vert_v, \left\vert 1 \right\vert_v \right\rbrace }  \leq \left\vert b - \sigma \left( d \right) \right\vert_v <1. $$
Since $b$ is a $\left( D,S \right)$-integer it follows that $v \in S$. 

Now assume that $b = \infty = \left[0;1 \right]$ and $\sigma \left( d \right) = \left[ \sigma \left( d \right); 1 \right]$. Then
$$ \delta_v \left( b, \sigma \left( d \right) \right) = \frac{\left\vert b - \sigma \left( d \right) \right\vert_v}{\max \left\lbrace \left\vert 1 \right\vert_v, \left\vert 0 \right\vert_v \right\rbrace \max \left\lbrace \left\vert \sigma \left( d \right) \right\vert_v, \left\vert 1 \right\vert_v \right\rbrace }  \leq \left\vert b - \sigma \left( d \right) \right\vert_v <1. $$
Since $b$ is a $\left( D,S \right)$-integer it follows that $v \in S$. 
\end{proof}

%%%%%%%%%%%%%%%%%%%%%%%%%%%%%%%%%%%%%%%%%%%%%%%%%%%%%%%%%%%%%%%%%%%%%%%%%%%%%%%%
\section{Integrality}\label{sec:Integrality}%%%%%%%%%%%%%%%%%%%%%%%%%%%%%%%%%%%%%%%%%%
%%%%%%%%%%%%%%%%%%%%%%%%%%%%%%%%%%%%%%%%%%%%%%%%%%%%%%%%%%%%%%%%%%%%%%%%%%%%%%%%

\subsection{Runge's Method}\label{sec:Runge}
In this section we will prove a finiteness result which is critical to Theorem \ref{integrality}. To begin we prove the following bound.

\begin{prop}\label{bound}
Let $F$ be a global field, $S$ be a finite set of places, and $f_1, \dots, f_t \in F \left[x \right]$ be distinct irreducible polynomials with leading coefficients $a_1, \dots, a_t$. Let $v \in S$ and let $C_v = \left[ \frac{1}{2} \right]^d \cdot \left[ \displaystyle\prod_i \min \left\lbrace \left\vert a_i \right\vert_v, 1 \right\rbrace  \right] \cdot \left[ \displaystyle\prod_{} \min \left\lbrace \left\vert \b - \gamma \right\vert_v, 1 \right\rbrace  \right] $ where $\beta$ and $\gamma$ are taken to range over roots of all pairs polynomials $f_i \neq f_j$ and $d$ is the maximal degree of the $f_j$. If $\alpha \in F$ then for all but at most one $f_i$ we have that $\left\vert f_j \left( \alpha \right) \right\vert_v \geq C_v$. 
\end{prop}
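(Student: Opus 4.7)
The plan is to argue by contradiction: assume there exist two distinct indices $i,j \in \{1,\dots,t\}$ with $|f_i(\a)|_v < C_v$ and $|f_j(\a)|_v < C_v$, and derive a contradiction by deducing $|f_j(\a)|_v \geq C_v$.

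First I factor each polynomial over $\overline{F}$ as $f_k(x) = a_k \prod_{\rho \in R_k}(x - \rho)$, where $R_k$ denotes the set of roots of $f_k$ in $\overline{F}$. Because $f_i$ and $f_j$ are distinct irreducible polynomials over $F$ we have $R_i \cap R_j = \emptyset$. Choose $\b \in R_i$ and $\g \in R_j$ that are closest to $\a$ with respect to $|\cdot|_v$, and, after relabeling $i,j$ if necessary, assume $|\a - \b|_v \leq |\a - \g|_v$.

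The key step is a triangle-inequality estimate: for every $\g' \in R_j$,
\[
|\b - \g'|_v \;\leq\; |\a - \b|_v + |\a - \g'|_v,
\]
and by our reduction combined with the minimality of $|\a - \g|_v$ over $R_j$ we have $|\a - \b|_v \leq |\a - \g|_v \leq |\a - \g'|_v$. Hence $|\a - \g'|_v \geq \tfrac{1}{2}|\b - \g'|_v$, so
\[
|f_j(\a)|_v \;=\; |a_j|_v \prod_{\g' \in R_j} |\a - \g'|_v \;\geq\; |a_j|_v \cdot 2^{-d_j} \prod_{\g' \in R_j} |\b - \g'|_v.
\]

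To finish I weaken this lower bound so that it matches $C_v$. The fact that $d_j \leq d$ replaces $2^{-d_j}$ by $2^{-d}$; the chain $|a_j|_v \geq \min\{|a_j|_v,1\} \geq \prod_k \min\{|a_k|_v,1\}$ handles the leading-coefficient factor; and since every pair $(\b,\g')$ with $\g' \in R_j$ is a cross-polynomial root pair, these $d_j$ factors all appear in the big product defining $C_v$ while every other factor there is at most $1$, giving $\prod_{\g' \in R_j} |\b - \g'|_v \geq \prod \min\{|\b'' - \g''|_v,1\}$. Combining the three substitutions yields $|f_j(\a)|_v \geq C_v$, contradicting the assumption. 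The main bookkeeping hurdle is this last inequality; it is what makes it essential that $C_v$ is defined via a product over \emph{every} pair of roots from different polynomials, so that precisely the $d_j$ factors we need are already present and the remaining factors only help.
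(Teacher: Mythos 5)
Your proof is correct and is essentially the argument in the paper: the paper picks the root $\b_i$ (over all $f_j$) closest to $\a$, applies the triangle inequality $\left\vert \b_i - \gamma \right\vert_v \leq 2 \left\vert \a - \gamma \right\vert_v$ to each root $\gamma$ of any other $f_j$, and then weakens each factor to the universal minima defining $C_v$, exactly as you do. Your contradiction framing and your choice of closest roots within $R_i$ and $R_j$ (rather than globally) are only cosmetic differences.
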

\proof{
For any $v \in S$ fix an embedding $F \hookrightarrow \mathbb{C}_v$. Pick a root $\beta_i$ of some $f_i$ such that $\left\vert \alpha - \beta_i \right\vert_v \leq \left\vert \alpha - \beta \right\vert_v$ for each root $\beta$ of of any $f_j$. For any $j \neq i$ we have that 
\begin{eqnarray*}
\left\vert f_j \left( \a \right) \right\vert_v & = & \left\vert a_j \right\vert_v \cdot \prod_{ f_j \left( \gamma \right) = 0 } \left\vert \a - \gamma \right\vert_v \\
& \geq & \left\vert a_j \right\vert_v \cdot \prod_{ f_j \left( \gamma \right) = 0 } \frac{1}{2} \left\vert \b_i - \gamma \right\vert_v \ \ \left(\text{ since }\left\vert \alpha - \beta_i \right\vert_v \text{ is minimal}\right)\\
& \geq & \left[ \frac{1}{2} \right]^d \cdot \left[ \displaystyle\prod_i \min \left\lbrace \left\vert a_i \right\vert_v, 1 \right\rbrace  \right] \cdot \left[ \displaystyle\prod_{} \min \left\lbrace \left\vert \b - \gamma \right\vert_v, 1 \right\rbrace  \right] \\
& = & C_v.
\end{eqnarray*}
}

We now use Proposition \ref{bound} to show that if a rational function has enough irreducible factors in relation to a set of primes $S$, then only finitely many points can be $S$-integral to its vanishing divisor.

\begin{lem}\label{Runge_proof}
Let $F$ be a global field, and $S$ be a finite set of places (containing the infinite places) with $\left\vert S \right\vert = r$. 
If $\vp \in \Fx$ is a rational map where the numerator factors into irreducible polynomials $f_1, \dots, f_t $ with $t>r$ and 
$D = \left\lbrace \delta \in F \vert \vp \left(\delta \right) = 0 \right\rbrace$ is the vanishing set of $\vp$ then there are at most finitely many points $\a \in F$ which are $S$-integral to $D$.
\end{lem}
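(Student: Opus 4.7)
My plan is to execute Runge's method: I would show that every $\alpha \in F$ which is $S$-integral to $D$ has height bounded by a constant depending only on $S$ and the $f_j$, and then invoke the Northcott property (Theorem \ref{Nortcott}) to conclude finiteness. The key maneuver will be to select, for each such $\alpha$, a single index $j^\ast$ for which $|f_{j^\ast}(\alpha)|_v$ is bounded away from $0$ at \emph{every} place of $F$ simultaneously; this is where the strict inequality $t > r$ will enter decisively.

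First I would, after possibly enlarging $S$ to absorb the finitely many places where some coefficient of some $f_j$ has a pole, apply Lemma \ref{S_Int} to each $f_j$ to obtain $|f_j(\alpha)|_v \geq 1$ for every $v \notin S$ and every $j$. Next, for each $v \in S$, Proposition \ref{bound} supplies a positive constant $C_v$ depending only on $v$ and the $f_j$, together with at most one exceptional index $i_v$ such that $|f_j(\alpha)|_v \geq C_v$ for all $j \neq i_v$. Since $|S| = r < t$, the pigeonhole principle yields an index $j^\ast \in \{1,\dots,t\} \setminus \{i_v : v \in S\}$. For this $j^\ast$ we have $|f_{j^\ast}(\alpha)|_v \geq C_v$ on $S$ and $|f_{j^\ast}(\alpha)|_v \geq 1$ off $S$. (Note that $f_{j^\ast}(\alpha) = 0$ would force $\alpha \in D$, contradicting $(D,S)$-integrality, so this case may be excluded.)

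I would then exploit the identity $h(y) = h(1/y) = \sum_v \log^+\!\bigl(1/|y|_v\bigr)$, valid by the product formula, to conclude
\[
h(f_{j^\ast}(\alpha)) \;\leq\; \sum_{v \in S} \log^+\!\bigl(1/C_v\bigr),
\]
a constant $K$ independent of $\alpha$. Combining this with the standard functoriality $h(P(\alpha)) = (\deg P)\, h(\alpha) + O_P(1)$ applied to $P = f_{j^\ast}$ gives $h(\alpha) \leq (K + C_{f_{j^\ast}})/\deg f_{j^\ast}$; since $j^\ast$ depends on $\alpha$ only through a finite set of possibilities, taking the worst case yields a uniform upper bound on $h(\alpha)$, and Theorem \ref{Nortcott} finishes the proof. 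The main obstacle is the pigeonhole step, which is precisely where the hypothesis $t > r$ is needed; once the uniform lower bound on $|f_{j^\ast}(\alpha)|_v$ is in place, the height arithmetic is routine.
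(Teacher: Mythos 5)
Your proposal is correct and follows essentially the same route as the paper's proof: Proposition \ref{bound} plus the pigeonhole count $t>r$ to select a single $f_{j^\ast}$ bounded below at all $v\in S$, Lemma \ref{S_Int} and the product formula to bound $h(f_{j^\ast}(\alpha))$ by a sum over $S$ alone, height functoriality to bound $h(\alpha)$, and Northcott to conclude. The small points you add (enlarging $S$ to cover coefficient poles, excluding $f_{j^\ast}(\alpha)=0$, taking the worst case over the finitely many possible $j^\ast$) are details the paper leaves implicit, not a different argument.
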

\proof{By Proposition \ref{bound}, for each $v \in S$ there is an effectively computable $C_v$ such that for any $\a \in F$ we have that 
$\left\vert f_i \left( \a \right) \right\vert_v  \geq C_v$ for all but at most one $f_i$. 
Since there are fewer places $v \in S$ than there are polynomials $f_i$ it follows that there is an $f_k$ such that 
$\left\vert f_k \left( \a \right) \right\vert_v \geq C_v$ for every $v \in S$.

If $\a$ is $S$-integral to $D$ then it follows from Lemma \ref{S_Int} and the product formula that 
$$h \left( f_k \left( \a \right) \right) = \displaystyle\sum_{v \in S} - n_v \log^- \left\vert f_k \left( \a \right) \right\vert_v 
\leq \displaystyle\sum_{v \in S} - n_v \log C_v.$$ 
Where $n_v$ is the local degree of $v$. 
Since $h \left( f_k \left( \a \right) \right) \geq d_k h \left( \a \right) + M_k$, where $d_k = \deg\left( f_k \right)$ and 
$M_k$ is an effectively computable constant which depends only upon $f_k$ and not on $\a$, 
it follows that there is an effectively computable $C$ such that $$h \left( \a \right) \leq C.$$ Thus the set of such $\a$ must be finite by Northcott.
\
}

\subsection{Integrality in Dynamics}\label{subsec_dynamicintegrality}
We are now prepared to prove Theorem \ref{integrality}.

\begin{proof}[Proof of Theorem \ref{integrality}]
If $\a$ is preperiodic then $\mathcal{O}_\vp \left( \a \right)$ is finite and the result is trivial. So assume that $\a$ wanders.

Let $S$ be a finite set of primes which includes the archimidean primes and the primes of bad reduction. Additionally, let $\b$ have period $k$. As $\b$ is not exceptional, there exists an element $\b_1 \in \vp^{-k} \left( \b \right)\subseteq \overline{F}$ which is not in the periodic cycle of $\b$. Thus the numerator of $\vp^{k}\left( x\right) -\b$ contains an irreducible polynomial factor, $g_1 \in F \left[ x \right]$, of which $\b_1$ is a root. Similarly there is a $\b_2 \in \vp^{-k}\left(\b_1 \right)$ which does not lie in the periodic cycle of $\b$. As before we see that the numerator of $\vp^{2k} \left( x \right) - \b$ must contain an irreducible polynomial factor, $g_2 \in F \left[ x \right]$, of which $\b_2$ is a root. Note that $\b_2$ is not a root of $g_1$ and that $\b_1$ is not a root of $g_2$. Proceeding inductively we are able to produce distinct irreducible polynomials $g_1, \dots, g_{r+1} \in F \left[ x \right]$ where each $g_i$ divides the numerator of $\vp^{ik}\left( x \right) - \b$. As $\b$ is periodic with period $k$ it follows that all of the $g_i$ are factors of the numerator of $\vp^{k \left( r+1 \right)} \left( x\right)- \b$. We thus have a rational function, $\vp^{k \left( r+1 \right)} \left( x\right) - \b \in \Fx$, where $r+1$ distinct irreducible polynomials divide the numerator.

Let $D\subseteq \overline{F}$ be the roots of the numerator of $\vp^{k \left( r+1 \right)} \left( x\right) - \b$, note that since $\b$ is a root of $\vp^{k \left( r+1 \right)} \left( x\right) - \b $ it is an element of $D$. Lemma \ref{Runge_proof} shows that there are only finitely many elements $\a \in F$ which are $S$-integral to $D$. 
Since $\vp^n \left( \alpha \right)$ is $S$-integral to $\beta$ if and only if $\vp^{n-r-1} \left( \a \right)$ is $ S$-integral to $D$ it follows that there can be only finitely elements of $\mathcal{O}_\varphi \left( \alpha \right)$ which are $S$-integral to $\b$.
\end{proof}

Using the same notation we deduce the following corollaries.

\begin{cor}\label{Runge}
If $\a \in F$ is a wandering point and $\b \in F$ is a non-exceptional periodic point then there exists infinitely many prime ideals $\mathfrak{p}$ such that $\vpn\left( \a \right) \equiv \b \mod \mathfrak{p}$ for some $n$. 
\end{cor}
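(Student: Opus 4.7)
The plan is to prove the contrapositive via Theorem \ref{integrality}. Suppose, for contradiction, that only finitely many primes $\mathfrak{p}$ satisfy $\vpn(\a) \equiv \b \pmod{\mathfrak{p}}$ for some $n$, and call this finite collection $T$. Form the finite set of places
\[
S = T \cup \{\text{archimedean places of } F\} \cup \{\text{places of bad reduction of } \vp\},
\]
so that $S$ is a finite set of places of $F$ containing the archimedean ones, making it a legitimate set to plug into Theorem \ref{integrality}.

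Next I would translate the assumption into a statement about the chordal metric. Recall that for any non-archimedean place $v$ and any points $P, Q \in \mathbb{P}^1(F)$, we have $\delta_v(P,Q) < 1$ if and only if $P$ and $Q$ reduce to the same point in $\mathbb{P}^1(\mathfrak{f}_{\mathfrak{p}_v})$. Therefore, for every $v \notin S$ (in particular $v \notin T$) and every $n \geq 0$, we have $\vpn(\a) \not\equiv \b \pmod{\mathfrak{p}_v}$, which means $\delta_v(\vpn(\a),\b) = 1$. By definition this says $\vpn(\a)$ is $(\{\b\},S)$-integral for every $n \geq 0$.

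Now I would invoke Theorem \ref{integrality}: since $\b$ is periodic and non-exceptional, only finitely many elements of $\mathcal{O}_\vp(\a)$ are $S$-integral to $\b$. Combining this with the previous paragraph, the set $\{\vpn(\a) : n \geq 0\}$ must contain only finitely many distinct elements, so $\a$ is preperiodic. This contradicts the hypothesis that $\a$ is a wandering point, completing the proof.

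The argument is essentially a clean packaging step — the real work is already encoded in Theorem \ref{integrality} — so there is no serious obstacle. The one place to be careful is the equivalence between $\vpn(\a) \equiv \b \pmod{\mathfrak{p}_v}$ and $\delta_v(\vpn(\a),\b) < 1$ at non-archimedean places; this is standard for good primes of $\vp$, which is why those places of bad reduction are thrown into $S$ at the outset.
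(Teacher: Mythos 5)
Your proposal is correct and is essentially the paper's own argument: both deduce the corollary directly from Theorem \ref{integrality} by observing that if only finitely many primes witnessed the congruence, then absorbing them into $S$ would make every element of the infinite orbit $\mathcal{O}_\vp(\a)$ $S$-integral to $\b$, contradicting the theorem. Your version just spells out the contrapositive packaging and the chordal-metric translation that the paper compresses into ``immediate from the definition of $S$-integrality.''
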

\begin{proof}

This is immediate from the definition of $S$-integrality. Since only finitely many elements of $\mathcal{O}_\vp \left( \a\right)$ are $S$-integral to $\b$ for any finite set of primes $S$ it follows that there must be infinitely many primes $\p$ for which $\vpn \left( \a \right) \equiv \b \mod \p$ for some $n$. 
\end{proof}

\begin{cor}\label{notperiodic}
Let $\vp \left(x \right) \in F \left( x \right) $, and $\a \in F$ be non-periodic. There exists infinitely many primes $\p$ for which $\a$ is not periodic modulo $\p$.
\end{cor}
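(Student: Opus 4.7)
The plan is to split into two cases based on whether $\a$ is strictly preperiodic or wandering, because the Runge-type machinery of Corollary~\ref{Runge} requires $\a$ to have an infinite forward orbit. If $\a$ is strictly preperiodic, the orbit $\mathcal{O}_\vp(\a)$ is a finite set, so the collection $\{\vp^n(\a)-\a : n\ge 1\}$ consists of only finitely many elements of $F$, all nonzero by non-periodicity. Each such nonzero element has only finitely many prime divisors, so adjoining the finitely many primes of bad reduction shows that $\a$ can be periodic modulo $\p$ for only finitely many $\p$, which is stronger than required.

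The substantive work is in the wandering case. I would choose a periodic non-exceptional point $\b$ of $\vp$ of some period $k$. Since at most two points are exceptional while $\vp$ admits infinitely many periodic points in $\overline{F}$, such a $\b$ exists, and after passing to a finite extension $F'/F$ if necessary we may assume $\b\in F'$. Applying Corollary~\ref{Runge} over $F'$ to the pair $(\a,\b)$, the set $T$ of primes $\p'$ of $F'$ for which some iterate satisfies $\vp^n(\a)\equiv\b\pmod{\p'}$ is infinite. Next I claim that only finitely many $\p'\in T$ can have $\a$ periodic modulo $\p'$. Indeed, for a prime $\p'\in T$ of good reduction, if $\overline{\a}$ is periodic under $\overline{\vp}$, then its forward orbit is a pure cycle which, by the defining property of $T$, also contains $\overline{\b}$. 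The cycle length equals the period of $\overline{\b}$, which divides $k$ because $\vp^k(\b)=\b$ holds globally. Hence the cycle length divides $k$, so $\overline{\vp^k(\a)}=\overline{\a}$, i.e., $\p'$ divides the fixed nonzero element $\vp^k(\a)-\a$ of $F'$; only finitely many $\p'$ can do so.

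Removing these finitely many primes from $T$ leaves infinitely many $\p'$ of $F'$ at which $\a$ is not periodic. To descend to $F$, observe that whenever $\p'\mid\p$ the equality $\overline{\vp}^n(\overline{\a})=\overline{\a}$ in $\ff_\p$ persists in the larger residue field $\ff_{\p'}$, so periodicity modulo $\p$ implies periodicity modulo $\p'$, and by contraposition non-periodicity modulo $\p'$ implies non-periodicity modulo $\p$. Since each prime of $F$ has at most $[F':F]$ extensions to $F'$, the infinite family of non-periodic $\p'$ produces an infinite family of non-periodic $\p$. The main obstacle in this plan is the cycle-alignment step: recognizing that the simultaneous congruence $\vp^n(\a)\equiv\b\pmod{\p'}$ together with periodicity of $\overline{\a}$ pins the reduced cycle length to a divisor of $k$ is precisely what reduces the problem to the finite-primes condition on the single nonzero quantity $\vp^k(\a)-\a$; the extension-field and descent issues are routine by comparison.
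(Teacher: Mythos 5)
Your proposal is correct and follows essentially the same route as the paper: the same split into the strictly preperiodic and wandering cases, with the wandering case handled by choosing a non-exceptional periodic point $\b$ in a finite extension, invoking Corollary~\ref{Runge} to get infinitely many primes where the orbit of $\a$ meets $\overline{\b}$, and then showing that periodicity of $\overline{\a}$ at such a prime forces a divisibility condition that only finitely many primes can satisfy. The only (harmless) differences are cosmetic: you pin down the contradiction via $\p' \mid \vp^k(\a)-\a$ where the paper uses $\a \equiv \b_i \bmod \p$ for some $\b_i$ in the cycle, and you spell out the descent from $F'$ back to $F$, which the paper leaves implicit.
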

\begin{proof}
If $\a$ is preperiodic then its strict forward orbit $\mathcal{O}^+_\vp \left( \a \right)$ is finite, denote it as $\left\lbrace \a_1, \dots, \a_l \right\rbrace$. For any $1 \leq i \leq l$ only finitely many primes contain $\left( \a - \a_i \right)$, since there are only finitely many $i$ we see that $\a$ can only have periodic reduction for finitely many primes. 

Assume now that $\a$ wanders, after possibly passing to an extension $E/F$ we can find a periodic $\b_1 \in E$ which is not exceptional. Let $\b_1$ have exact period $k$ and let $\mathcal{O}_\vp\left(\b_1 \right) = \left\lbrace \b_1, \b_2, \dots, \b_k\right\rbrace$. By Corollary \ref{Runge} there are infinitely many primes $\p \subseteq E$ such that $\vpn \left( \a \right) \equiv \b_1 \mod \p$. For each $\b_i$ there are only finitely many prime ideals which contain $\left(\a - \b_i \right)$, so there are only finitely many primes for which $\a \equiv \b_i \mod \p$. Thus there are only finitely many primes for which $\a$ is in the periodic cycle of $\b_1$ modulo $\p$ and is thus periodic modulo $\p$. Since there are infinitely many prime for which $\vpn \left( \a \right) \equiv \b_1 \mod \p $ for some $n$ we have that $n > k$ infinitely often and therefore $\a$ must be strictly preperiodic infinitely often.
\end{proof}

%%%%%%%%%%%%%%%%%%%%%%%%%%%%%%%%%%%%%%%%%%%%%%%%%%%%%%%%%%%%%%%%%%%%%%%%%%%%%%%%
\section{Intersections in Orbits}\label{sec:Interesect}%%%%%%%%%%%%%%%%%%%%%%%%%%%%%%%%%%%%%%%%%%
%%%%%%%%%%%%%%%%%%%%%%%%%%%%%%%%%%%%%%%%%%%%%%%%%%%%%%%%%%%%%%%%%%%%%%%%%%%%%%%%
We will now prove the following analog to Theorem \ref{nfintersection}.
\begin{thm*}\label{ffintersection}
If $F$ is a function field with a finite field of constants, 
$\vpx \in \Fx$ a rational map with separability degree of at least 2, 
and $\a,\b \in F$ are points such that $\b \notin \mathcal{O}_\vp^+ \left( \a \right)$, 
then there is a set of primes $\mathcal{P}$ with positive natural density such that 
$\overline{\b} \not\in \mathcal{O}_{\overline{\vp}}^+ \left( \overline{\a} \right)$ 
for any prime $\p \in \mathcal{P}$.
\end{thm*}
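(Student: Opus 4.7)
The plan is to follow the strategy of Theorem~\ref{nfintersection} from \cite{BGHKST} in the function field setting, substituting Theorem~\ref{integrality} for Silverman's theorem (which is unavailable in positive characteristic). If $\a$ is preperiodic, then $\mathcal{O}_\vp^+(\a) = \{\a_1, \ldots, \a_\ell\} \subseteq F$ is a finite set, each element of which differs from $\b$; so each $\b - \a_i$ is a non-zero element of $F$, hence has only finitely many prime divisors. Excluding these primes together with the finite set of primes of bad reduction still leaves a density-one set of primes $\p$ on which $\overline{\b} \neq \overline{\a_i}$ for every $i$, giving the conclusion. I may therefore assume $\a$ wanders.

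The second step is to reduce to the case where $\b$ is periodic and non-exceptional. Because $\a$ wanders, no periodic point of $\vp$ lies in $\mathcal{O}_\vp^+(\a)$. If $\b$ is strictly preperiodic and $\g := \vp^m(\b)$ is the periodic tail, then $\overline{\b} \in \mathcal{O}_{\overline{\vp}}^+(\overline{\a})$ at a prime of good reduction forces $\overline{\g} \in \mathcal{O}_{\overline{\vp}}^+(\overline{\a})$, so it suffices to produce a positive-density set of primes at which $\overline{\g}$ avoids the orbit of $\overline{\a}$. In the wandering or exceptional sub-cases I would compare $\b$ to an auxiliary periodic non-exceptional point $\g$ (possibly over a finite extension of $F$), whose existence is guaranteed by the separability-degree $\geq 2$ hypothesis on $\vp$.

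For the central case where $\b$ is periodic of period $k$ and non-exceptional, I would mirror the construction used in the proof of Theorem~\ref{integrality}: build distinct irreducible factors $g_1, g_2, \ldots \in F[x]$ of the numerators of $\vp^{jk}(x) - \b$ arising from backward-orbit representatives $\b_j \in \vp^{-jk}(\b)$ that avoid the periodic cycle of $\b$. If $\overline{\vp^n(\a)} = \overline{\b}$ at a good-reduction prime $\p$, then tracing the orbit of $\overline{\a}$ backward along such a preimage branch forces $\p$ to divide $\vp^{n-jk}(\a) - \b_j$ for some $j$. Combining this preimage-tree analysis with the Weil-type prime counting estimate $\#\{\p : \deg \p \leq N\} \sim q^N/N$ and the Northcott property (Theorem~\ref{Nortcott}), I expect to bound the number of primes of degree at most $N$ at which $\overline{\b} \in \mathcal{O}_{\overline{\vp}}^+(\overline{\a})$ strictly below $q^N/N$, so that the complement has positive natural density.

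The main obstacle is converting the qualitative statement of Theorem~\ref{integrality} into a quantitative density bound. Setting $B_n = \{\p : \p \mid \vp^n(\a) - \b\}$, naive height estimates give only $|B_n| = O(d^n)$; and since a mod-$\p$ orbit has at most $q^N + 1$ elements when $\deg \p = N$, the union bound $\sum_{n \leq q^N + 1} |B_n|$ is hopelessly larger than $q^N/N$. The real work is to exploit the nested inclusion $B_n \subseteq B_{n+k}$ coming from the periodicity of $\b$, to organize primes by their first hitting time $n_\p$, and to leverage the preimage-tree construction so that only primes with $n_\p = O(\log_d q^N)$ contribute in a controlled way---this is precisely where the separability-degree assumption becomes essential, ensuring that the tree branches richly enough for the Northcott counting to be effective.
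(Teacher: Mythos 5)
Your first step (preperiodic $\a$) and the easy exclusions are fine, but the central case contains a genuine gap that you yourself flag: you never actually produce the positive-density set of primes. Your plan is to count primes dividing $\vp^n(\a)-\b$ and beat $q^N/N$ by a union bound, and as you observe the naive estimate $|B_n|=O(d^n)$ summed over $n\leq q^N+1$ is hopeless; the ``real work'' you defer to exploiting nesting and first hitting times is precisely the part that does not close. Density cannot be extracted this way from Theorem \ref{integrality}, whose role in the actual argument is far more modest: it (via Corollary \ref{notperiodic}) supplies a \emph{single} auxiliary prime of good separable reduction at which the relevant point has non-periodic reduction. The missing idea is Galois-theoretic. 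One fixes a large $M$, passes to the splitting field $L$ of $\vp^M(x)-\b$ over a suitable finite extension $E$, and shows via a ramification analysis (critical points of $\overline{\vp}$ lift to critical points of $\vp$ in the same residue disk, so an unramified situation at the auxiliary prime forces $L/E$ unramified there with Frobenius fixing no root of $\vp^M(x)-\b$; this is Lemmas \ref{critical_point}, \ref{RamPoints}, \ref{Ramification}) that the Frobenius conjugacy class at the auxiliary prime fixes no preimage $w$ of $\b$ under $\vp^M$. Chebotarev for function fields (Theorem \ref{MurtyScherkCheb}) then hands you a positive-density set $\mathcal{P}$ of primes with that same Frobenius class; for such $\pi$, any congruence $\vp^m(z)\equiv\b\bmod\pi$ with $m\geq M$ forces an earlier hit $\vp^t(z)\equiv\b$ with $t<M$, and the finitely many primes realizing $\vp^t(\a)\equiv\b$ for $t<M$ (or a short periodic return of $\b$) are discarded without affecting the density. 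The density is thus a Frobenius density, not a count of primes dividing orbit elements.

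Two smaller issues. Your reduction to ``$\b$ periodic and non-exceptional'' is not needed and does not go through in all cases: for strictly preperiodic $\b$ the passage to the periodic tail $\g$ is legitimate (and $\g\notin\mathcal{O}_\vp^+(\a)$ since $\a$ wanders), but for wandering $\b$ comparing to an unrelated auxiliary periodic point $\g$ gives no information about whether $\overline{\b}$ lies in the reduced orbit of $\overline{\a}$, and for exceptional periodic $\b$ your Runge-style factor construction is unavailable by hypothesis. The Chebotarev route treats all $\b$ uniformly. Finally, when you do pass to a finite extension you need Lemma \ref{densityextension} to descend a positive-density set of primes back to $F$; that step should be made explicit.
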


\subsection{Separability, Ramification, and Density}\label{subsec:geosep}
If $F$ is a function field with a finite field of constants then $F$ has positive characteristic. 
To begin proving Theorem \ref{ffintersection} we must first account for the fact that not all finite extensions $L/F$ are separable. 
Once these issues are dealt with we use Corollary \ref{notperiodic} and reproduce the proof from \cite{BGHKST}. 
We begin by making the following definitions.

\begin{defn}\label{GeoSep}
Let $F$ be a function field with a finite field of constants, $\vp \in \Fx$, $\vp = \frac{f\left( x \right)}{g \left( x \right)}$, and 
$\deg \left( \vp \right) = \max \left\lbrace \deg \left( f \right) , \deg \left( g  \right) \right\rbrace \geq 2$. 
We say that $\vp$ is \textbf{separable} (resp. inseparable, purely inseparable) if it induces a separable (resp. inseparable, purely inseparable) field extension. 
\end{defn}

In addition to a rational function being separable we will require that it have separable reduction. 
A map $\vp \in \Fx$ has \textbf{separable reduction} at a prime $\p$ if the reduction of $\vp$ modulo $\p$ is separable.
We now show that having separable reduction is not a strong condition to impose.

\begin{prop}\label{finite_sep_reduction}
If $\vp \in \Fx$ is separable with degree at least 2, then $\vp$ has separable reduction for all but finitely many primes $\p$ of $F$.
\end{prop}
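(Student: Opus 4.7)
The plan is to translate separability of $\vp$ into the non-vanishing of the formal derivative and then to verify that this non-vanishing survives reduction modulo almost every prime $\p$. Writing $\vp = f/g$ in lowest terms with $f, g \in F[x]$, one has that $\vp$ induces a separable extension $\Fx / F(\vp)$ if and only if $\vp \notin F(x^p)$ (where $p$ is the characteristic of $F$), equivalently the formal derivative $\vp'(x) = (f'g - fg')/g^2$ is not identically zero. Setting $h := f'g - fg' \in F[x]$, separability of $\vp$ is therefore equivalent to $h \neq 0$. This is the key reformulation: the question "does $\bar\vp$ remain separable?" becomes "does $\bar h$ remain nonzero?".

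Next I would recall that $\vp$ has good reduction at all but finitely many primes of $F$: after clearing denominators we may arrange $f, g \in \oo_\p[x]$ for almost all $\p$, with $\deg \bar f = \deg f$, $\deg \bar g = \deg g$, and $\mathrm{Res}(f,g)$ a $\p$-unit, so that $\bar f$ and $\bar g$ remain coprime and $\bar\vp = \bar f / \bar g$ is a rational map of the same degree over the residue field $\ff_\p$. Because formal differentiation commutes with reduction on $\oo_\p[x]$, this gives $\bar h = \bar f' \bar g - \bar f \bar g' = \bar g^2 \, \bar \vp'$ in $\ff_\p(x)$ at every prime of good reduction.

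Finally I would argue that $\bar h \neq 0$ for almost all $\p$. Indeed, $h$ is a nonzero polynomial in $F[x]$ with only finitely many nonzero coefficients, each of which is a nonzero element of $F$; any fixed nonzero coefficient (say, the leading one) has $v_\p = 0$ outside a finite set of primes, and at every such $\p$ it reduces to a nonzero element of $\ff_\p$. Hence $\bar h \neq 0$ outside a finite exceptional set, forcing $\bar\vp' \neq 0$, and therefore $\bar\vp$ is separable.

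No step here presents a serious obstacle; the proposition reduces to the elementary principle that a finite collection of nonzero elements of $F$ has finite joint bad-reduction locus, combined with the standard compatibility of formal differentiation and reduction on $\p$-integral polynomials. The only care needed is to start with a $\p$-integral model of $\vp$ so that the identity $\bar h = \bar g^2 \bar\vp'$ is literally valid in the residue field, which is precisely what good reduction at $\p$ provides.
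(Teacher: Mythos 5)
Your argument is correct and follows the same strategy as the paper's proof: identify a nonzero element of $F$ that witnesses separability, throw in the resultant to guarantee good reduction, and note that this finite collection of nonzero elements consists of $\p$-units outside a finite set of primes, so the witness survives reduction. The only real difference is the choice of witness --- you use a nonzero coefficient of the Wronskian $f'g - fg'$, while the paper uses the coefficient $a$ of a monomial $ax^{l}$ of $f$ or $g$ with $p \nmid l$; these are equivalent reformulations of $\vp \notin F\left(x^p\right)$, and both yield the required finite exceptional set.
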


\begin{proof}
Let $F$ have characteristic $p$ and write $\vp = f/g$ for two relatively prime polynomials $f,g \in F \left[ x \right]$. 
Since $\vp$ is separable there exists a term $a x^l$ of either $f$ or $g$ for which $a \neq 0 $ and $p \nmid l$. 
Let $S$ be the set of primes of $F$ which divide the numerator or denominator of $a \cdot \text{Res} \left( \vp \right)$. 
Then $\vp $ has separable good reduction for every $\p \notin S$. 
\end{proof}

If $\vp \in \Fx$ is an inseparable rational function then it can be written as $\vp \left( x \right) = h \left( x^r \right)$ 
where $h$ is separable and $r$ is the inseparable degree of $\vp$. 
%If $r \geq 2$ then every point $\a \in F$ will ramify thus in order to consider ramficiation points of an inseparable map we must modify our definition of ramification slightly. 
Let $E/F$ be the splitting field of $\vp$ and define $\Eu$ to be the maximal separable sub-extension, that is $F \subseteq \Eu \subseteq E$. 
If $r \geq 2$ then $E/\Eu$ is a purely inseparable extension so any prime $\p$ of $F$ will ramify in $E$. 
Because of this fact we will modify our definition of a ramified prime.

\begin{defn}\label{ram_prime}
For $E $ and $\Eu$ as above we say that a prime $\p$ of $F$ ramifies in $E$ if it is ramified in $\Eu$. 
\end{defn}

Recall that the \textbf{natural density} of a set of places $S$ of a global field $F$ is defined to be 
$D \left( S \right) = \displaystyle\lim_{N \rightarrow \infty} \dfrac{\# \left\lbrace \text{places } 
v \in S \vert N_v \leq N \right\rbrace}{\# \left\lbrace \text{places } v \in F \vert N_v \leq N \right\rbrace}$, 
where $NS_v$ is the size of the residue field of $v$. 
We will prove our density result for an extension $L/F$, however when sets of primes with positive density in a field 
$L$ are intersected with a subfield $F$ they will yield a set of primes with positive density in the subfield, though the density may decrease. We prove this fact now.

\begin{lem}\label{densityextension}
Let $E$ be be a finite extension of any global field $F$ with $\left[E:F \right] = d$. Let $S_E$ be a set of primes of $E$ with positive density $\delta$ and let $S_F = \left\lbrace \p = \q \bigcap F \vert \q \in S_E \right\rbrace$. $S_F$ has positive density $\gamma $ where $\gamma \geq \frac{\delta}{d} > 0$.
\end{lem}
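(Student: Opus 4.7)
The plan is to bound the counts on the $F$-side in terms of the counts on the $E$-side via the restriction map $\q \mapsto \q \cap F$. The first step rests on two standard facts: each prime $\p$ of $F$ has at most $[E:F] = d$ primes of $E$ above it, since $\sum_{\q \mid \p} e(\q|\p) f(\q|\p) = d$; and the inclusion $\ff_\p \hookrightarrow \ff_\q$ forces $N_\q = (N_\p)^{f(\q|\p)} \geq N_\p$. Together these imply that restriction sends $\{\q \in S_E : N_\q \leq N\}$ into $\{\p \in S_F : N_\p \leq N\}$ with fibers of size at most $d$, yielding the basic counting inequality
$$\#\{\p \in S_F : N_\p \leq N\} \;\geq\; \frac{1}{d}\,\#\{\q \in S_E : N_\q \leq N\}.$$

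Writing $\pi_K(N) := \#\{v \in M_K : N_v \leq N\}$, I would next divide through by $\pi_F(N)$ and multiply in $\pi_E(N)/\pi_E(N)$ on the right to rewrite the lower bound as
$$\frac{\#\{\p \in S_F : N_\p \leq N\}}{\pi_F(N)} \;\geq\; \frac{1}{d}\cdot\frac{\#\{\q \in S_E : N_\q \leq N\}}{\pi_E(N)}\cdot\frac{\pi_E(N)}{\pi_F(N)}.$$
The middle factor tends to $\delta$ by hypothesis, so everything reduces to showing $\liminf_{N \to \infty} \pi_E(N)/\pi_F(N) \geq 1$.

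The main (and essentially only) obstacle is this last asymptotic, but it is a standard consequence of prime counting in global fields. For number fields one has $\pi_K(N) \sim N/\log N$ for every $K$, so the ratio tends to $1$. For a function field $F$ with constant field $\mathbb{F}_q$, Weil's estimate for the number of degree-$n$ primes gives $\pi_F(N) \sim N/\log_q N$; the same count performed for $E$, whose constant field is some $\mathbb{F}_{q^c}$ with $1 \leq c \leq d$, produces $\pi_E(N) \sim c\,N/\log_q N \geq \pi_F(N)$. In either case the ratio is bounded below by $1$ asymptotically, and combined with the preceding display this yields $D(S_F) \geq \delta/d > 0$, as claimed.
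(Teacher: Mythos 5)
Your fiber-counting inequality and the reduction to a comparison of denominators are both correct, and up to that point you are giving a careful version of the paper's own argument: the paper only establishes the numerator bound $\#\left\lbrace \p \in S_F : N_\p \leq N \right\rbrace \geq \frac{1}{d}\#\left\lbrace \q \in S_E : N_\q \leq N \right\rbrace$ and passes in silence over the fact that the two densities are normalized by different counting functions, which is exactly the point you isolate as $\liminf_{N}\pi_E(N)/\pi_F(N) \geq 1$. In the number field case the prime ideal theorem gives $\pi_K(N) \sim N/\log N$ for every $K$ and your proof is complete (indeed more complete than the paper's). The gap is in the function field case: the asymptotic $\pi_E(N) \sim c\,N/\log_q N$, and with it the inequality $\liminf_N \pi_E(N)/\pi_F(N) \geq 1$, fails whenever the constant field genuinely extends ($c \geq 2$), because $\pi_E$ is a step function jumping only at powers of $q^c$ while $\pi_F$ jumps at every power of $q$. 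Concretely, for $F = \Fqt$, $E = \mathbb{F}_{q^2}(t)$ and $N = q^m$ with $m$ odd, the places $\q$ of $E$ with $N_\q \leq N$ are those of degree at most $(m-1)/2$ over $\mathbb{F}_{q^2}$, so $\pi_E(N) \sim \frac{q^2}{q^2-1}\cdot\frac{2q^{m-1}}{m-1}$ while $\pi_F(N) \sim \frac{q}{q-1}\cdot\frac{q^m}{m}$, and the ratio tends to $\frac{2}{q+1} < 1$ along this subsequence.

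What your argument actually proves is that the lower density of $S_F$ is at least $\frac{\delta}{d}\cdot\liminf_N \frac{\pi_E(N)}{\pi_F(N)}$, which is positive (the liminf is bounded below by a positive constant depending on $q$ and $c$) but can be strictly smaller than $\frac{\delta}{d}$. This cannot be patched within your framework because the stated bound is itself false for natural density: take $F = \Fqt$, $E = \mathbb{F}_{q^2}(t)$, and $S_E$ the set of places of $E$ lying over even-degree places of $F$. Then $\delta = 1$, while $S_F$ is the set of even-degree places of $F$, whose counting ratio tends to $\frac{q}{q+1}$ along $N = q^m$ with $m$ even and to $\frac{1}{q+1}$ along $N = q^m$ with $m$ odd; so $S_F$ has no natural density, and its lower density $\frac{1}{q+1}$ is below $\frac{\delta}{d} = \frac{1}{2}$. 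The positivity conclusion --- which is all that is used in the proof of Lemma \ref{big} --- does survive your argument once the false asymptotic is replaced by the correct positive lower bound on $\liminf_N \pi_E(N)/\pi_F(N)$; the clean inequality $\gamma \geq \frac{\delta}{d}$ can be recovered either by assuming $E$ and $F$ have the same field of constants or by replacing natural density with Dirichlet density throughout.
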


\begin{proof}
For each prime $\p$ of $F$ there are at most $d$ primes $\q_1, \q_2, \dots, \q_n$ of $E$ such that $q_i \bigcap F = \p $. Given any $N$, for every $d$ primes $v \in S_E$ with $N_v \leq N$ there is at least one prime $\p \in S_F$ with $N_p \leq N$. Taking the limit as $N \rightarrow \infty$ we see that the density of $S_F$ is at least $\frac{\delta}{d}> 0$. 

\end{proof}
To obtain our density result we adapt the arguments of \cite{BGHKST} to prove lemmas 
\ref{RamPoints}, \ref{Ramification}, and \ref{big} in the function field setting. 
We will use Chebotarev's Density Theorem for function fields as proved by Murty and Scherk in \cite{MurtyScherk} which states the following.

\begin{thm}[Chebotarev's Density Theorem for Function Fields] \label{MurtyScherkCheb}
Let $F$ be a function field with a finite field of constants, and $E/F$ be a Galois extension with Galois group $G$. If $C \subset G$ is a conjugacy class, $\psi$ is the number of unramified primes of $E$, $\psi_C$ is the number of unramified primes of $E$ whose Frobenius substitution corresponds to $C$ then 
$$\left\vert \psi_C - \psi  \frac{\left\vert C \right\vert}{\left\vert G \right\vert} \right\vert \leq \frac{B}{\sqrt{N}} ,$$ 
where $N$ is the size of the constant field of $F$, and $B$ is a constant depending on the genus of the curve associated to $E$, the ramification of $E/F$, $\left\vert C \right\vert$, and $\left\vert G \right\vert $. 
\end{thm}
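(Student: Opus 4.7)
The plan is to follow the standard L-function strategy, which in the function field case runs especially cleanly because the relevant $L$-functions are polynomials in $q^{-s}$ and Weil's Riemann hypothesis for curves gives sharp bounds on their zeros. I would not try to reprove this from scratch but rather set up the machinery carefully so that the dependence of $B$ on the genus, on the ramification, and on $|C|,|G|$ becomes visible.

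First I would attach to every irreducible character $\chi$ of $G=\mathrm{Gal}(E/F)$ the Artin $L$-function $L(s,\chi,E/F)$, which in our setting is a polynomial in $u=q^{-s}$ of degree $d_\chi$ that can be read off from the conductor-discriminant formula together with the genus of the curve $X_F$ underlying $F$; explicitly, $d_\chi$ is controlled by $\chi(1)$, by $2g_F-2$, and by the local ramification exponents at the finitely many ramified places. By Weil's theorem (the Riemann hypothesis for curves over $\mathbb{F}_N$, applied to the curve corresponding to $E$), every reciprocal root of $L(s,\chi,E/F)$ has absolute value $\sqrt{N}$ when $\chi$ is nontrivial; for the trivial character we get the usual pole at $u=1/N$ supplying the main term.

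Next I would use character orthogonality on the conjugacy class $C$ to write the counting function
\[
\psi_C \;=\; \frac{|C|}{|G|}\sum_{\chi}\overline{\chi(C)}\,\Psi_\chi,
\]
where $\Psi_\chi=\sum_{\deg \mathfrak{p}\le \text{cutoff}} \chi(\mathrm{Frob}_\mathfrak{p})\log N\mathfrak{p}$ (or a similar Chebyshev-like sum) is read off from the logarithmic derivative of $L(s,\chi,E/F)$. The trivial character contributes the main term $\psi\cdot |C|/|G|$ via the pole of $\zeta_F$ at $s=1$. For each nontrivial $\chi$, expanding $-L'/L$ as a sum over the reciprocal roots and invoking the Weil bound $|{\rm root}|=\sqrt{N}$ gives the contribution of that character the size $d_\chi\cdot N^{-1/2}$ times the cutoff. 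Summing over $\chi$ and carrying out the standard partial summation (or truncated Perron argument) converts this into the error $B/\sqrt{N}$ with
\[
B \;\le\; \frac{|C|}{|G|}\sum_{\chi\ne 1}|\chi(C)|\,d_\chi,
\]
a quantity depending only on $|C|,|G|$, the genus of the curve, and the ramification of $E/F$, precisely as claimed.

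The main obstacle I anticipate is bookkeeping rather than conceptual: getting the explicit shape of $B$ requires relating $d_\chi$ to the genus via Riemann-Hurwitz and tracking the wild versus tame contributions to the Artin conductors at ramified places, which is delicate in positive characteristic. Once the $d_\chi$ are bounded uniformly in terms of $g$ and the ramification data, the Weil bound does the rest; this is exactly the step where Murty-Scherk do the careful arithmetic, and I would simply follow their bookkeeping rather than redo it.
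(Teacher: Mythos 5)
The paper offers no proof of this statement: it is quoted directly from Murty and Scherk, so there is no internal argument to compare yours against. Your outline is the standard proof and is essentially the one Murty--Scherk give --- orthogonality of characters on $C$, logarithmic derivatives of Artin $L$-functions, the fact that for nontrivial irreducible $\chi$ the $L$-function is a polynomial in $u=N^{-s}$ of degree controlled by the conductor--discriminant formula and Riemann--Hurwitz, and Weil's Riemann hypothesis for curves to place the inverse roots on $\left\vert u \right\vert = N^{-1/2}$ --- so as a strategy it is sound and correctly identifies where the dependence of $B$ on the genus, the ramification, and $\left\vert C\right\vert, \left\vert G \right\vert$ enters. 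Two caveats are worth flagging. First, as stated the theorem only makes sense if $\psi$ and $\psi_C$ count primes of bounded (or fixed) degree, which your ``cutoff'' implicitly supplies but which you should make explicit before running the Perron/partial-summation step. Second, your sketch assumes that the trivial character alone supplies the main term and every nontrivial character contributes only to the error; this is false when $E/F$ is not geometric, i.e.\ when $E$ contains a constant field extension $\mathbb{F}_{N^k}/\mathbb{F}_N$ with $k>1$, since the characters factoring through $\mathrm{Gal}(\mathbb{F}_{N^k}/\mathbb{F}_N)$ have $L$-functions with poles on $\left\vert u \right\vert = 1/N$ (they are twists of the zeta function of $F$) rather than polynomial $L$-functions, and their contribution interacts with the degree of the primes being counted. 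Handling this case --- either by restricting to geometric extensions or by conditioning the main term on the residue of the degree modulo $k$ --- is a genuine part of Murty--Scherk's argument, not mere bookkeeping, and it is the one conceptual gap in your proposal.
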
 

\subsection{Proof of Theorem \ref{ffintersection}}

To use Theorem \ref{MurtyScherkCheb} it is necessary to have an unramified field extension, 
thus we prove the following lemmas relating critical points of $\vp$ to ramified primes in a field extension of $F$ 
in the sense of Definition \ref{ram_prime}.

\begin{lem}\label{critical_point}
Let $k$ be a complete and algebraically closed non-Archimidean field with nontrivial valuation, 
also let $\vp \in k \left( x\right)$ be a rational function of degree at least 2 with good and separable reduction.
If $\b \in \mathbb{P}^1 \left( k \right)$ is such that the reduction $\overline{\vp}$ has a critical point at $\overline{\b}$,
then there is a critical point for $\vp$ lying in the same residue disk as $\b$. 
\end{lem}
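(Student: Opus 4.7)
My plan is to reduce the statement to a Hensel's lemma argument in an affine chart. After pre- and post-composing with elements of $\mathrm{PGL}_2(\mathcal{O})$ whose reductions lie in $\mathrm{PGL}_2(\tilde{k})$ (and therefore induce bijections on residue disks while preserving good and separable reduction), I may assume that both $\b$ and $\vp(\b)$ lie in $\mathcal{O}$. The source coordinate change transports critical points and residue disks bijectively, while the target coordinate change does not alter the critical locus at all; so it suffices to prove the claim in this affine situation.

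Next, I would write $\vp = f/g$ with $f,g \in \mathcal{O}[x]$ coprime and in good-reduction normal form, so that $\max\{\deg \overline{f},\deg \overline{g}\} = \deg \vp \geq 2$ and $\overline{f},\overline{g}$ remain coprime. Because $\vp(\b) \neq \infty$, we have $\overline{g}(\overline{\b}) \neq 0$. Set $h(x) := f'(x)g(x) - f(x)g'(x) \in \mathcal{O}[x]$, so that the affine critical points of $\vp$ are exactly the zeros of $h$ in $k$. Two key observations: since $\overline{\vp}$ is separable, $\overline{h}=\overline{f}'\overline{g}-\overline{f}\,\overline{g}'$ is a nonzero element of $\tilde{k}[x]$; and since $\overline{\vp}$ is ramified at $\overline{\b}$ with $\overline{g}(\overline{\b}) \neq 0$, we must have $\overline{h}(\overline{\b}) = 0$.

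Thus in $\tilde{k}[x]$ we can factor $\overline{h}(x) = (x-\overline{\b})^m\,\overline{q}(x)$ with $m\geq 1$ and $\overline{q}(\overline{\b})\neq 0$. These two factors are coprime in $\tilde{k}[x]$, so the factorization form of Hensel's lemma lifts them to a factorization $h(x) = H_1(x)H_2(x)$ in $\mathcal{O}[x]$ with $H_1$ monic of degree $m$ and $\overline{H_1}(x) = (x-\overline{\b})^m$. Since $k$ is algebraically closed, $H_1$ splits, and every root $c\in k$ of $H_1$ reduces to a root of $(x-\overline{\b})^m$, i.e.\ $\overline{c}=\overline{\b}$. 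Such a $c$ is a zero of $h$, hence a critical point of $\vp$ lying in the residue disk of $\b$, which is exactly what is required.

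The main things that could go wrong are the coordinate-change bookkeeping and a potential degree drop of $h$ upon reduction. The first is standard: $\mathrm{PGL}_2(\mathcal{O})$-changes preserve residue disks, good reduction, separability, and the critical locus (on the source). The second is harmless for the argument, since the factorization form of Hensel's lemma only requires the lifted factor $(x-\overline{\b})^m$ to be monic and coprime to its complement in $\tilde{k}[x]$; nothing in the argument needs $\deg \overline{h} = \deg h$. So the real content of the lemma is just the nonvanishing of $\overline{h}$, which is exactly the separable-reduction hypothesis.
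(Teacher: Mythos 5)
Your proof is correct and follows essentially the same route as the paper's: both identify the critical points with the zeros of the Wronskian $h = f'g - fg'$, and both use the separable good reduction hypothesis to see that $\overline{h}$ is a nonzero polynomial vanishing at $\overline{\b}$. The only difference is the final step — you lift the factor $\left(x-\overline{\b}\right)^m$ by Hensel's lemma, whereas the paper normalizes all critical points into the unit disk, writes $h = a\prod\left(x-\gamma\right)$ with $\left\vert a \right\vert = 1$, and reduces this factorization termwise to conclude $\overline{\b} = \overline{\gamma}$ for some critical point $\gamma$ — two equivalent ways of passing between the factorization of $h$ and that of $\overline{h}$.
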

\begin{proof}
By changing coordinates we may assume that $\left\vert \b \right\vert \leq 1$ 
and that $\left\vert \gamma \right\vert \leq 1$ for every critical point $\gamma$ of $\vp$. 
Writing $\vp = f/g$ for relatively prime polynomials $f,g \in \mathfrak{o}_k \left[ x \right]$. 
The roots of the Wronskian of $\vp$ are precisely the critical points of $\vp$ and 
$$\text{Wr}_\vp \left(x \right) = g \left( x \right) f^\prime \left( x \right) - g^\prime \left( x \right) f \left( x \right)  
= a \prod_{\vp^\prime \left( \gamma \right) = 0} \left( x - \gamma \right),$$
for some $a \in \mathfrak{o}_k$. Since $\vp $ has good separable reduction, 
the reduction of the last expression is not identically zero and in particular $\left\vert a \right\vert = 1$.
Evaluating the reduction at $\overline{\b}$ shows that
$$0 = \text{Wr}_{\overline{\vp}}\left( \overline{\b} \right) = \overline{\text{Wr} \left( \b \right)} 
= \overline{a} \prod_{\vp^\prime \left( \gamma \right) = 0} \left( \overline{\b} - \overline{\gamma} \right).$$
It follows that $\overline{\b} = \overline{\gamma}$ for some critical point $\gamma$, proving the lemma.
\end{proof}

We now apply this local result to our global field $F$. 

\begin{lem}\label{RamPoints}
Let $\tilde{\p}$ be a prime of $\mathfrak{o}_{\overline{F}}$ (where $\overline{F}$ is an algebraic closure of $F$ and $\mathfrak{o}_{\overline{F}}$ is its ring of integers), 
and let $\vp = h \left( x^r \right) \in \Fx$ be a rational map of separable degree $d \geq 2$, where $h$ is separable and $r$ is the inseparable degree of $\vp$.
Let $\p = \tilde{\p}\bigcap \mathfrak{o}_F$ be a prime such that $h$ has good and separable reduction at $\p$. 
Also let $\alpha \in \mathbb{P}^1 \left( F \right)$,
$E$ the splitting field of $\vp \left( x \right) - \a$, 
$\beta \in \vp^{-1} \left( \alpha \right) \subseteq \mathbb{P}^1 \left( E \right)$, 
and $\p^\prime := \tilde{\p} \bigcap E$. 
If $\p^\prime$ is ramified over $\p$ then $\beta$ is congruent modulo $\tilde{\p}$ to a ramification point of $\vp^m$. 
\end{lem}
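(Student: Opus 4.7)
I would reduce the question to the separable factor of $\vp$ via the substitution $\gamma := \b^r$, apply the local Lemma \ref{critical_point} in the completion at $\tilde{\p}$, and then pull back through Frobenius to recover a ramification point congruent to $\b$ modulo $\tilde{\p}$. Concretely, write $\vp = h(x^r)$ with $h$ separable and $r$ the inseparable degree (a power of the characteristic $p$). The equation $\vp(\b) = \a$ becomes $h(\gamma) = \a$ with $\gamma = \b^r$, so $\gamma$ is one of the roots $\gamma_1, \ldots, \gamma_d$ of $h(y) - \a \in F[y]$. Since $E$ is obtained from $F(\gamma_1, \ldots, \gamma_d)$ by adjoining $r$-th roots and $r$ is a power of $p$, this intermediate extension is purely inseparable. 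Therefore $E^{\text{sep}} = F(\gamma_1, \ldots, \gamma_d)$ is the splitting field of $h(y) - \a$ over $F$, and by Definition \ref{ram_prime} the hypothesis ``$\p'$ ramified over $\p$'' is equivalent to ``$\q := \tilde{\p} \cap E^{\text{sep}}$ ramified over $\p$ in $E^{\text{sep}}$''.

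Next, ramification at $\q$ must force $\overline{h}(y) - \overline{\a}$ to have a repeated root. Because $h$ has good and separable reduction at $\p$, the standard discriminant/Hensel argument for Galois splitting fields shows that $E^{\text{sep}}/F$ is unramified at $\q$ whenever $\overline{h}(y) - \overline{\a}$ is separable. Taking the contrapositive, some repeated root $\overline{\eta}$ exists in an algebraic closure of $\ff_\p$, automatically a critical point of $\overline{h}$ lying over $\overline{\a}$. I would then analyze the inertia subgroup $I_\q \subseteq \Gal(E^{\text{sep}}/F)$: any nontrivial $\sigma \in I_\q$ permutes the $\gamma_i$'s while fixing their residues modulo $\tilde{\p}$, and such a collision forces the specific residue $\overline{\gamma}$ to coincide with a critical point of $\overline{h}$.

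Passing to the $\tilde{\p}$-adic completion I then invoke Lemma \ref{critical_point}: there exists a true critical point $\eta$ of $h$ in the residue disk of $\gamma$, so $\overline{\eta} = \overline{\gamma}$. Let $\xi \in \overline{F}$ satisfy $\xi^r = \eta$; then $\xi$ is a ramification point of $\vp$ in the separable sense, because the separable factor $h$ of $\vp$ is critical at $\xi^r = \eta$. Because we are in characteristic $p$ and $r$ is a power of $p$, the $r$-th power map is injective on the residue field's algebraic closure, so $\overline{\xi}^r = \overline{\eta} = \overline{\gamma} = \overline{\b}^r$ forces $\overline{\xi} = \overline{\b}$. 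Thus $\b$ reduces modulo $\tilde{\p}$ to the ramification point $\xi$ of $\vp$, which is in particular a ramification point of every iterate $\vp^m$.

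The main obstacle I anticipate is the inertia step in the second paragraph: ensuring that it is the specific $\gamma$ corresponding to our chosen $\b$ whose reduction meets the critical set of $\overline{h}$, rather than only some other root of $h(y) - \a$. If $h(y) - \a$ is irreducible this follows from transitivity of $\Gal(E^{\text{sep}}/F)$ on the $\gamma_i$'s; in the reducible case, the presence of $\vp^m$ in the conclusion is essential, as one iterates by replacing $\a$ with a later value in the orbit and $\b$ with a deeper preimage until the relevant critical locus grows to cover the residue class of $\b$.
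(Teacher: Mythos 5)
Your proposal follows essentially the same route as the paper: ramification of $\tilde{\p}$ forces a collision of roots of the numerator of $\vp(x)-\a$ modulo $\tilde{\p}$, hence a critical point of the reduction $\overline{h}$, and Lemma \ref{critical_point} then lifts this to a genuine critical point of $h$ in the residue disk of $\b$. You are in fact more careful than the paper on the inseparable part: the paper writes $h(\b)=f(\b)=0$ where it should be $h(\b^r)=0$, whereas your substitution $\g=\b^r$ together with the injectivity of the $r$-th power map on residues handles this cleanly. The ``obstacle'' you flag at the end is real, but it is present and unaddressed in the paper's own proof as well, which simply asserts that ramification of $\p^\prime$ forces the \emph{specific} root $\b$ to collide with another root; this is justified when one works with the prime of $F(\b)$ (which is how the lemma is actually applied in Lemma \ref{Ramification}), though not literally for an arbitrary root of the splitting field.
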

\begin{proof}
Let $\left\vert \cdot \right\vert_{\tilde{\p}}$ be the $\tilde{\p}$-adic absolute value on $\overline{F}$.
By changing coordinates we can assume that $\alpha = 0$ and $\left\vert \beta \right\vert_{\tilde{\p}} \leq 1$. 
Let $h \left( x \right) = \frac{f\left( x \right)}{g \left( x \right)}$ ($f, g \in F\left[ x \right]$). 
As $\vp \left( \beta \right) = \alpha = 0$ we have that $h \left( \b \right) = f \left( \beta  \right) = 0$. 
Since $\p^\prime $ is ramified over $\p$, by definition $\p$ is ramified in $\Eu$, the maximal separable sub-extension of $E$. 
Additionally, since $\p^\prime$ is ramified $f$ must have at least one other root congruent to $\beta$ modulo $\tilde{\p}$, thus $\overline{f}$ has a multiple root at $\beta$. 
Since $h$ has good reduction at $\p$, $\overline{g} \left( \beta \right) \neq 0$, 
therefore $\overline{h}$ has a multiple root at $\beta$, hence $\overline{h}^\prime \left( \beta \right) = 0$. 
Applying Lemma \ref{critical_point} finishes the proof. 
\end{proof}

Using Lemma \ref{RamPoints} we now show that there there is a field extension $L/F$ where we can find an unramified extension with the necessary extension of residue fields. 

\begin{lem}\label{Ramification}
Let $\tilde{\p}$ be a prime of $\mathfrak{o}_{\overline{F}}$ and let $\vp \in \Fx$ 
be a rational function with separable degree $d \geq 2$, and separable good reduction at $\p = \tilde{\p}\bigcap \mathfrak{o}_F$, 
and let $\b \in \mathbb{P}^1 \left( F \right)$. 
Then there exists a finite extension $E$ of $F$ with the following property: 
for any finite extension $L$ of $E$, there is a positive integer $M$ such that for any $m \geq M$ and all 
$\a \in \mathbb{P}^1 \left( \overline{F} \right)$ with $\vp^m \left( \a \right) = \b$, 

\begin{itemize}
\item[(i)] $\tau$ does not ramify over $\p^\prime$, and
\item[(ii)] $\left[ \mathfrak{o}_{L \left( \a \right)}/\tau : \mathfrak{o}_L / \p^\prime \right] > 1$
\end{itemize}
where $\tau := \tilde{\p} \bigcap \mathfrak{o}_{L \left( \a \right)}$ and $\p^\prime = \tilde{\p} \bigcap \mathfrak{o}_L$. 
\end{lem}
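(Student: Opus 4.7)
My plan is to verify conditions (i) and (ii) separately, first by choosing $E$ large enough to rationalize the critical geometry of $\vp$, and then by picking $M$ depending on $L$ and the fixed prime $\tilde{\p}$.

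I would take $E$ to be the finite extension of $F$ obtained by adjoining (a) all critical points of the separable part of $\vp$ (a set of size at most $2d-2$, where $d$ is the separable degree), and (b) the full forward orbits of the preperiodic critical points. Both sets are finite, so $E/F$ is finite, and for every $L \supseteq E$ the critical locus and its preperiodic cycles are $L$-rational.

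For condition (i), if $\vp^m(\a) = \b$ and $\tau$ is ramified over $\p^\prime$, then iterating Lemma~\ref{RamPoints} down the tower $L = L_0 \subseteq L_1 \subseteq \cdots \subseteq L_m = L(\a)$, with each $L_{i+1}$ obtained by adjoining the relevant preimage of $\vp^{m-i}(\a)$, produces an index $0 \leq j < m$ and a critical point $c \in E$ with $\vp^j(\a) \equiv c \pmod{\tilde{\p}}$. Applying $\vp^{m-j}$ then gives $\vp^{m-j}(c) \equiv \b \pmod{\tilde{\p}}$. Ranging over the finitely many critical $c$, this forces $\vp^k(c) \equiv \b \pmod{\tilde{\p}}$ for some $1 \leq k \leq m$. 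When $c$ is preperiodic its $\vp$-orbit in $E$ is finite and the congruence holds for only finitely many $k$. When $c$ is wandering, I apply Corollary~\ref{Runge} (equivalently Theorem~\ref{integrality}) to the wandering point $c$ and the periodic point $\b$ to control the set of $k$ for which $\vp^k(c)$ reduces to $\b$ at the fixed prime $\tilde{\p}$. Setting $M_1$ strictly larger than every such $k$, condition (i) holds for all $m \geq M_1$.

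For condition (ii), suppose $[\mathfrak{o}_{L(\a)}/\tau : \mathfrak{o}_L/\p^\prime] = 1$, so that $\overline{\a}$ lies in the finite residue field $\kappa := \mathfrak{o}_L/\p^\prime$ and $\overline{\vp}^m(\overline{\a}) = \overline{\b}$ inside $\kappa$. Since $\kappa$ is finite, the backward $\overline{\vp}$-orbit of $\overline{\b}$ in $\kappa$ stabilizes after a bounded number of steps to the set of $\overline{\vp}$-periodic points that eventually land on $\overline{\b}$. For $m$ beyond this stabilization length, $\overline{\a}$ must be periodic under $\overline{\vp}$, so $\a$ has periodic reduction at $\tilde{\p}$. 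Corollary~\ref{notperiodic} applied to the non-periodic lift $\a$ rules this out at all but finitely many primes, which I absorb into an integer $M_2$. Taking $M = \max(M_1, M_2)$ finishes the proof.

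The main obstacle, as I see it, lies in condition (i): ensuring that the congruences $\vp^k(c) \equiv \b \pmod{\tilde{\p}}$ have only finitely many solutions $k$ requires the integrality results of Section~\ref{sec:Integrality}, because for a fixed $\tilde{\p}$ the residual orbit $\overline{\vp}^k(\overline{c})$ is eventually periodic and could in principle revisit $\overline{\b}$ infinitely often. This is where the choice of $E$ must be refined — for instance, by arranging that $\b$ lies on a non-exceptional periodic cycle so that Theorem~\ref{integrality} applies uniformly across the finite critical locus — to translate the asymptotic integrality bound into the uniform threshold $M$ demanded by the lemma.
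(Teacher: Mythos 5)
There is a genuine gap, and it sits exactly where you flagged it: the finiteness of the set of $k$ with $\vp^k\left(c\right) \equiv \b \mod \tilde{\p}$ for a critical point $c$ and the \emph{fixed} prime $\tilde{\p}$. The integrality results of Section \ref{sec:Integrality} cannot supply this. Theorem \ref{integrality} and Corollary \ref{Runge} are statements about $S$-integrality, i.e.\ about the existence of \emph{some} prime outside $S$ witnessing a congruence; they give no control over which iterates meet $\b$ at one prescribed prime. At a fixed $\tilde{\p}$ the residual orbit of $\overline{c}$ is eventually periodic in the finite residue field, and if $\overline{\b}$ lies on that cycle then $\vp^k\left(c\right) \equiv \b \mod \tilde{\p}$ for infinitely many $k$; your preperiodic case has the same defect, since a finite orbit still revisits $\overline{\b}$ infinitely often once it enters a cycle containing it. The mechanism the paper actually uses is entirely different: it first reduces to the situation where $\overline{\b}$ is \emph{not periodic modulo} $\tilde{\p}$ (when $\overline{\b}$ is periodic, one takes $\b$ fixed and replaces it by the points of $\vp^{-1}\left(\b\right) \setminus \left\lbrace \b \right\rbrace$, which are not periodic mod $\tilde{\p}$ by the good separable reduction hypothesis, and restricts to exact preimages). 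Once $\overline{\b}$ is not periodic mod $\tilde{\p}$, each $\gamma$ satisfies $\vp^j\left(\gamma\right) \equiv \b \mod \tilde{\p}$ for at most one $j$, and this single observation is the source of all the finiteness: it bounds the pairs $\left(n,z\right)$ with $\vp^n\left(z\right) = \b$ and $z$ congruent to a critical point (these $z$, not the critical points and their orbits, are what get adjoined to form $E$), and it produces $M$ by pigeonhole on the finite set $\mathbb{P}^1\left(\mathfrak{o}_L/\p^\prime\right)$. Your closing suggestion of ``arranging that $\b$ lies on a non-exceptional periodic cycle'' is not available: $\b$ is an arbitrary given point, and even if it were such a point the integrality theorem would still say nothing about a single fixed prime.

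Your argument for (ii) has a second, independent error: Corollary \ref{notperiodic} produces \emph{infinitely many} primes at which a fixed non-periodic point of $F$ has non-periodic reduction; it does not say ``all but finitely many'' (that stronger statement is false --- see the $x^q+1$ example in Section \ref{sec:Hasse}), and it cannot be applied here in any case because $\tilde{\p}$ is fixed in advance and the points $\a$ are varying elements of $\mathbb{P}^1\left(\overline{F}\right)$, not a fixed non-periodic point of $F$; nothing in the hypotheses prevents such an $\a$ from being periodic. The correct route to (ii) is again the ``at most one $j$'' observation: once $\overline{\b}$ is known not to be periodic mod $\tilde{\p}$, for all large $M$ the equation $\vp^M\left(x\right) = \b$ has no solutions at all in $\mathbb{P}^1\left(\mathfrak{o}_L/\p^\prime\right)$, so any genuine preimage $\a$ must generate a residue extension of degree greater than $1$.
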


\begin{proof}
We begin by assuming that $\b$ is not periodic modulo $\p$. In this case for any $\gamma \in \mathfrak{o}_{\overline{F}}$, there is at most one $j \geq 0$ such that 
\begin{equation}\label{finite} \vp^j \left( \gamma \right) \equiv \b \mod \tilde{\p}. \end{equation}

In particular, for each ramification point $\gamma \in \mathbb{P}^1 \left( \overline{F} \right)  $ of $\vp$, 
there are only finitely many integers $n \geq 0 $ and points $z \in \mathbb{P}^1 \left( \overline{F} \right)$ 
such that $\vp^n \left( z \right) = \b$ and $z \equiv \gamma \mod \tilde{\p}$. 
Let $E$ be the finite extension of $F$ formed by adjoining all such points $z$. 

Given any finite extension $L$ of $E$, let $\p^\prime = \tilde{\p} \bigcap \mathfrak{o}_L$. 
Since $\mathbb{P}^1 \left( \mathfrak{o}_L / \p^\prime \right)$ is finite, 
(\ref{finite}) implies that for all sufficiently large $M$, 
$\vp^M \left( x \right) = \b$ has no solutions in $\mathbb{P}^1 \left( \mathfrak{o}_L / \p^\prime \right)$. 
Fix any such $M$; note that $M$ must be larger than any of the integers $n$ above. 
Hence, given any $m \geq M$ and $\a \in \mathbb{P}^1 \left( \overline{F} \right)$ such that $\vp^m \left( \a \right) = \b$, 
we must have $\left[ \mathfrak{o}_{L \left( \a \right)} / \tau : \mathfrak{o}_L / \p^\prime \right] > 1$, 
where $\tau = \tilde{\p} \bigcap \mathfrak{o}_{L \left( \a \right)}$, proving (ii). 
Furthermore, if $\a$ is a root of $\vp^m \left( x \right)  - \b$, then there are two possibilities: 
either (1) $\a$ is not congruent modulo $\tilde{\p}$ to a ramification point of $\vp^m$, or (2) $\vp^j \left( \a \right) = z $ 
for some $j \geq 0$ and some point $z \in \mathbb{P}^1 \left( L \right)$ from the previous paragraph. 
In case (1), $\tau$ is unramified over $\p^\prime$ by Lemma \ref{RamPoints}. 
In case (2), choosing a minimal such $j \geq 0 $, and applying Lemma \ref{RamPoints} with $z$ in the roll of $\b $ and $j$ in the role of $m$, 
$\tau$ is again unramified over $\p^\prime$. 
Thus (ii) holds.

If $\b$ is periodic modulo $\p$ we will assume that it is fixed and then apply the above prove to each of the 
$\gamma_i \in \vp^{-1} \left( \b \right) \setminus \left\lbrace \b \right\rbrace$ and produce a field $E_i$ with the stated properties. 
To do this we note that no $\gamma_i$ is periodic modulo $\tilde{\p}$ as $\gamma_i \not\equiv \b \mod \tilde{\p}$; 
otherwise the ramification index of $\vp$ at $\b$ would be greater modulo $\tilde{\p}$ than over $F$ which contradicts the hypothesis of good and separable reduction. 
As $\gamma_i \not\equiv \b \mod \tilde{\p}$ it must be strictly preperiodic. 

For each $E_i$ let $M_i$ be the constant as defined above, let $M = \displaystyle\max_i\left( M_i \right) + 1$ and let $E$ be the compositum of the $E_i$. 
For any $m \geq M$ and $\a \in \mathbb{P}^1 \left( \overline{F} \right)$ such that $\vp^m \left( \a \right) = \b$ but $\vp^t \left( \a \right) \neq \b$ for any $0 \leq t < m$
it follows that $\vp^{m-1 } \left( \b \right) \neq \a$. Thus $\vp^{m-1} \left( \b \right)$ is one of the non periodic $\gamma_i$. 
Applying the arguments above for the $\gamma_i$ we see that $\b$ satisfies conditions (i) and (ii). 
\end{proof}

We now have enough tools to prove Theorem \ref{ffintersection}. We begin by showing the result holds when $\a$ is not periodic. 

\begin{lem}\label{big}
Let $F$ be a function field with a finite field of constants. 
If $\vp \in F\left( x \right)$ and $\a,\b \in F$, where $\vp$ is a rational function with separable of degree at least 2, 
$\a$ is not periodic and $\b \notin \mathcal{O}_\vp^+ \left( \a \right)$, 
then there is a set of primes $\mathcal{P}$ of positive density such that for any $\p \in \mathcal{P}$ and $n \geq 1$ we have 
$$\vpn \left( \a \right)  \not\equiv \b \mod \p.$$
\end{lem}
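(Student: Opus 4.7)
The plan is to treat the preperiodic and wandering cases for $\a$ separately, combining Lemma \ref{Ramification} with Chebotarev's theorem for function fields (Theorem \ref{MurtyScherkCheb}) to produce the desired positive-density set.

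If $\a$ is preperiodic, its strict forward orbit $\mathcal{O}_\vp^+\left(\a\right) = \left\lbrace \a_1, \dots, \a_l \right\rbrace$ is a finite subset of $F$, and the hypothesis $\b \notin \mathcal{O}_\vp^+\left(\a\right)$ gives $\b \neq \a_i$ for every $i$. Each nonzero element $\b - \a_i \in F$ is divisible by only finitely many primes, so outside a finite exceptional set one has $\vp^n\left(\a\right) \not\equiv \b \mod \p$ for every $n \geq 1$, which gives a density-one set and settles this case.

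Now assume $\a$ wanders. Pick a prime $\tilde{\p}$ of $\oo_{\overline{F}}$ lying above a prime $\p_0$ of $F$ at which $\vp$ has good separable reduction (available by Proposition \ref{finite_sep_reduction}), and apply Lemma \ref{Ramification} to obtain a finite extension $E/F$; for any finite extension $L/E$ there is then an integer $M$ such that for every $m \geq M$ and every root $\a^\ast \in \overline{F}$ of $\vp^m\left(x\right) - \b$, the prime $\tau = \tilde{\p} \cap \oo_{L\left(\a^\ast\right)}$ is unramified over $\p^\prime = \tilde{\p} \cap \oo_L$ with strict residue-field extension. Let $K_m$ be the Galois closure over $L$ of $L\left(\vp^{-m}\left(\b\right)\right)$ and let $G_m = \Gal\left(K_m/L\right)$ act on the root set of $\vp^m\left(x\right) - \b$. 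Condition (ii) of Lemma \ref{Ramification} says that the Frobenius at $\p^\prime$ acts on this root set without fixed points; thus the conjugacy class $C_m \subset G_m$ of Frobenius at $\p^\prime$ is a nonempty set of derangements. Theorem \ref{MurtyScherkCheb} then yields a positive-density set of primes $\p^{\prime\prime}$ of $L$ with Frobenius in $C_m$, for each of which $\vp^m\left(x\right) = \b$ has no solution in the residue field, so in particular $\overline{\vp}^m\left(\overline{\a}\right) \neq \overline{\b}$.

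To upgrade this single-level condition to the statement for all $n \geq 1$, I work in the profinite group $G_\infty = \varprojlim_m G_m$: the Frobenius at $\tilde{\p}$ gives a coherent element whose image in each $G_m$, $m \geq M$, is fixed-point-free on $\vp^{-m}\left(\b\right)$, and an inverse-limit Chebotarev argument refines the density statement to produce primes of $L$ whose Frobenius is fixed-point-free at every level $m \geq M$ at once. For such a prime $\p$, no root of $\vp^n\left(x\right) - \b$ reduces into $\ff_\p$ for any $n \geq M$, so $\overline{\vp}^n\left(\overline{\a}\right) \neq \overline{\b}$ for every $n \geq M$; the finitely many small-$n$ cases (for $n < M$) are handled by further excluding the finite set of primes at which the nonzero element $\vp^n\left(\a\right) - \b \in F$ vanishes. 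Descending to $F$ via Lemma \ref{densityextension} then yields the required positive-density set $\mathcal{P}$. The main obstacle is the inverse-limit step: one must verify that the fixed-point-free locus cut out in $G_\infty$ by the compatible tower $\left\lbrace C_m \right\rbrace$ retains positive Haar measure, which relies on the specific ramification-theoretic structure of the preimage tower of $\b$ together with the existence of the distinguished Frobenius element $\text{Frob}_{\tilde{\p}}$ witnessing the property uniformly at every finite level.
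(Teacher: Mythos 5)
Your handling of preperiodic $\a$ and your single-level argument (pick a prime of good separable reduction, apply Lemma \ref{Ramification}, take the Galois closure of $L\left(\vp^{-M}\left(\b\right)\right)$, observe that Frobenius acts as a derangement on the new preimages, invoke Theorem \ref{MurtyScherkCheb}) track the paper's proof. The divergence --- and the genuine gap --- is in how you pass from the level-$M$ statement to all $n \geq M$. The inverse-limit step does not work as written: knowing that the single element $\mathrm{Frob}_{\tilde{\p}}$ is fixed-point-free at every level only shows that the infinite-level fixed-point-free locus in $G_\infty$ is \emph{nonempty}, and a single element (or the closed procyclic subgroup it generates) has Haar measure zero, so it gives no lower bound on the measure of that locus. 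Even granting positive measure, Theorem \ref{MurtyScherkCheb} is a finite-level statement; the primes whose Frobenius is fixed-point-free at every level form a decreasing intersection whose density is only bounded above by the finite-level densities, and those can tend to $0$. Controlling exactly this quantity is the (hard, largely open) territory of arboreal Galois representations; you correctly flag it as ``the main obstacle'' but do not overcome it.

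The paper needs none of this: one application of Chebotarev at the single level $M$ suffices, because the level-$M$ condition propagates to all higher levels inside the residue field. If $\pi$ is a prime whose Frobenius fixes none of the new roots $w$ of $\vp^M\left(x\right) - \b$, then there is no $c \in \mathbb{P}^1\left(\ff_\pi\right)$ with $\overline{\vp}^M\left(c\right) = \overline{\b}$ and $\overline{\vp}^t\left(c\right) \neq \overline{\b}$ for all $t < M$. So if $\overline{\vp}^m\left(\overline{\a}\right) = \overline{\b}$ with $m \geq M$, applying this to $c = \overline{\vp}^{m-M}\left(\overline{\a}\right)$ yields some $t < M$ with $\overline{\vp}^{m-M+t}\left(\overline{\a}\right) = \overline{\b}$ and $m-M+t < m$; descending, one reaches an exponent smaller than $M$. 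The finitely many congruences $\vp^t\left(\a\right) \equiv \b \mod \pi$ for $t < M$ (and, when $\b$ wanders, $\vp^t\left(\b\right) \equiv \b \mod \pi$ for $1 \leq t < M$) are then ruled out by deleting a finite set of primes, exactly as in your last sentence. Replacing your profinite step with this descent closes the argument. Two smaller points: conclusion (ii) of Lemma \ref{Ramification} can only hold for the \emph{new} preimages of $\b$ (roots already lying in $L$ have residue degree $1$), and the base prime must be chosen so that the hypotheses underlying that lemma's proof hold --- in particular so that a non-periodic $\b$ is non-periodic modulo it, which is where Corollary \ref{notperiodic} enters; good separable reduction alone is not enough.
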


\begin{proof} 
By Corollary \ref{notperiodic} there are infinitely many primes $\p$ of $F$ for which $\a$ has non-periodic reduction.  
As there are only finitely many primes of bad reduction and finitely many primes with inseparable reduction we can choose a prime $\tau$ of $F$ of good reduction for which $\a$ is not periodic.

By Lemma \ref{Ramification} there is a finite extension $E/F$ which satisfies the conclusions of the lemma. $E$ can be thought of as a finite extension of itself. By Lemma \ref{densityextension} it suffices to prove the result for $E$. Let $\p$ be a prime of $E$ extending $\tau$. For a sufficiently large integer $M$, and each $w \in \overline{F}$ such that	$\vp^m \left( w \right) = \b $ but $\vp^t \left( w \right) \neq \b$ for every $0 \leq t < M$ we have
\begin{itemize}
\item[(i)] $\p^\prime$ does not ramify over $\p$, and
\item[(ii)] $\left[ \mathfrak{o}_{E \left( w \right)} / \p^\prime : \mathfrak{o}_E / \p \right]>1$.
\end{itemize}
Where $\p^\prime$ is a prime of $E \left( w \right) $ extending $\p$. 

Fix such an $M$, and let $L/E$ be the splitting field for $\vp^M$, and note that this is a Galois extension as it is a splitting field. 
Also, by property (i) above $L/E$ is unramified over $\p$. 
By property (ii), the Frobenius element of $\p$ belongs to a conjugacy glass of $G:=\Gal\left( L/E \right)$ whose members do not fix any of the point $w$. 
By the Chebotarev Density Theorem, \ref{MurtyScherkCheb}, this implies that there is a set of primes, $\mathcal{P}$, with positive density whose Frobenius conjugacy class in $\Gal \left(L/F \right)$ fix none of the $w$.

Fix any prime $\pi \in \mathcal{P}$. Let $m \geq 0$ be an integer and $z \in E$ be a point such that $\vp^m \left( z \right) \equiv \b \mod \pi$. 
We claim that there is some $0 \leq n < M$ such that $\vp^n \left( z \right) \equiv \b \mod \pi$.

To begin the proof note that if $m <M $ we are already done, so assume that $m \geq M$. 
We can also assume that $m$ is the minimal integer $m \geq M$ such that $\vp^m \left( z \right) \equiv \b \mod \tau$. 
By the definition of $\mathcal{P}$, there can be no $c \in E$ such that $\vp^M \left(c \right) \equiv \b \mod \pi$ where $\vp^t \left( c \right) \not\equiv \b \mod \pi$ for every $0 \leq t < M$.  

Let $c = \vp^{m-M} \left( z \right) \in E$. 
As $m$ was chosen to be the minimal integer greater than $M$ there must be a $t$, with $0 \leq t < M$, 
such that $\vp^t \left( \vp^{m-M} \left( z \right) \right) \equiv \b \mod \pi$. 
So $\vp^{m-M+t} \left( z \right) \equiv \b \mod \pi$. 
But $0 \leq m-M +t < m$, which contradicts the minimality of $m$. 
Proving the claim. 

If $\b$ is preperiodic (including the case where $\b$ is periodic), 
then let $\mathcal{U} \subseteq \mathcal{P}$ be the set of primes $\pi$ such that $\vp^t \left( \a \right) \equiv \b \mod \pi$, 
for some $0 \leq t <M $. $\mathcal{U}$ is a finite set. 
Remove the primes of $\mathcal{U}$ from $\mathcal{P}$ and note that, as $\mathcal{U}$ is finite, the density of $\mathcal{P}$ has not changed.

If $\b$ is not preperiodic, then let $\mathcal{V} \subseteq \mathcal{P}$ be the set of primes $\pi$ such that $\vp^t \left( \b \right) \equiv \b \mod \pi$ for some $1 \leq t < M $. 
$\mathcal{V}$ is a finite set. 
Remove the primes of $\mathcal{V}$ from $\mathcal{P}$ and again note that, as $\mathcal{V}$ is finite, the density of $\mathcal{P}$ has not changed.

Suppose there is a $\pi \in \mathcal{P}$ and an integer $m \geq M$ such that $\vp^m \left( \a \right) \equiv \b \mod \pi$. 
Then by the earlier claim there is a $0 \leq t <M $ such that $\vp^t \left( \a \right) \equiv b \mod \pi$. 
So by the construction of $\mathcal{U}$ we must have that $\b$ is not preperiodic. 
Since $\vp^{m-t-1} \left( \vp \left( \b \right) \right) \equiv \b \mod \pi$, and because $m-t-1 \geq 0$, the claim tells us that there is a $0 \leq k < M$ such that $\vp^{k+1} \left( \b \right) \equiv \b \mod \pi$. 
But this is impossible by the construction of $\mathcal{V}$. Proving the lemma. 
\end{proof}

After applying Lemma \ref{big} all that remains to prove of Theorem \ref{ffintersection} is the case where $\alpha$ is periodic. 
We will now examine that case and conclude the proof. 

\begin{proof}[Proof of Theorem \ref{ffintersection}]
If $\a$ is not periodic then Theorem \ref{ffintersection} follows from Lemma \ref{big}.
If $\a$ is periodic then it has a finite orbit $\mathcal{O}_\vp \left( \a \right) = \left\lbrace \a = \a_0, \a_1, \dots, a_m \right\rbrace$. 
Since only finitely many primes contain the set of $\beta - a_i$ it follows that for any prime $\p$ outside of that finite set $\vpn \left( \alpha \right)  \not\equiv \beta \mod \p$ for every $n$. 
\end{proof}

\subsection{Stronger Non-Periodic Reduction}\label{subsec:consequences}
We now apply Theorem \ref{ffintersection} and Theorem \ref{nfintersection} to prove a stronger version of Corollary \ref{notperiodic}. Recall that Corollary \ref{notperiodic} says that if $F$ is a global field, $\vp \in \Fx$ is a rational function of degree at least 2, and $\a \in F$ wanders then there are infinitely many primes $\p$ for which $\a$ has non-periodic reduction. Theorem \ref{ffintersection} allows us to say something about the this infinite set of primes.

\begin{cor}\label{strong_non_periodic}
Let $F$ be a global field, $\vp \in \Fx$ a rational map of degree at least 2 and let $\a \in F$ be a non-periodic point. There exists a set of primes $S$ with positive density such that $\a$ does not have periodic reduction for any prime $\p \in S$.
\end{cor}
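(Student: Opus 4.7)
The plan is to apply Theorem \ref{nfintersection} (when $F$ is a number field) or Theorem \ref{ffintersection} (when $F$ is a function field with a finite field of constants) with $\b = \a$. The hypothesis $\b \notin \mathcal{O}^+_\vp(\a)$ then becomes $\a \notin \mathcal{O}^+_\vp(\a)$, which simply says that $\vp^n(\a) \neq \a$ for every $n \geq 1$, i.e., that $\a$ is non-periodic. This is exactly what we assumed, so the hypothesis of the intersection-in-orbits theorem is verified with no further work.

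The conclusion then delivers a set $\mathcal{P}$ of primes of positive natural density such that $\overline{\a} \notin \mathcal{O}^+_{\overline{\vp}}(\overline{\a})$ for every $\p \in \mathcal{P}$. For any $\p \in \mathcal{P}$ at which $\vp$ has good reduction (a cofinite condition), this is precisely the statement that $\overline{\vp}^n(\overline{\a}) \neq \overline{\a}$ for every $n \geq 1$; hence $\overline{\a}$ is not periodic under $\overline{\vp}$, and $\a$ fails to have periodic reduction at $\p$. Discarding the finitely many primes of bad reduction (where ``periodic reduction'' is defined via the reduced orbit rather than $\mathcal{O}_{\overline{\vp}}$) from $\mathcal{P}$ does not change the density, so the resulting subset $S$ has positive density and is the set required by the corollary.

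I expect the only real subtlety to be in the function field case, where Theorem \ref{ffintersection} is proved under the assumption that $\vp$ has separable degree at least $2$, while the corollary is stated just under $\deg \vp \geq 2$. If $\vp$ is purely inseparable, one would factor $\vp = h(x^r)$ as in Section \ref{subsec:geosep} and handle that case separately, either by appealing to the separable $h$ or by directly analyzing iterates of a purely inseparable map (whose denominators grow rapidly under Frobenius, making non-periodicity modulo a density-one set of primes easy to exhibit). Once this bookkeeping is carried out, the corollary is an immediate translation of the intersection-in-orbits theorem into the language of periodic reduction.
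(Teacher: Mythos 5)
Your proposal is correct and follows essentially the same route as the paper: apply Theorem \ref{nfintersection} (number field case) or Theorem \ref{ffintersection} (function field case) with $\b = \a$, observing that non-periodicity of $\a$ is precisely the hypothesis $\a \notin \mathcal{O}^+_\vp\left(\a\right)$, and then discard the finitely many primes of bad reduction. Your extra attention to the purely inseparable case in positive characteristic (where Theorem \ref{ffintersection} requires separable degree at least $2$ but the corollary assumes only $\deg \vp \geq 2$) addresses a point the paper's own proof passes over silently, which is a welcome refinement rather than a deviation.
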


\begin{proof}
Since $\a$ is not periodic, by definition $\a \notin \mathcal{O}_\vp^+ \left( \a \right)$. Thus by Theorems \ref{ffintersection} and \ref{nfintersection} there is a set of primes $\mathcal{P}$ with positive density for which $\vpn \left( \a \right) \not\equiv \a \mod \p$ for any $\p \in \mathcal{P}$ and every $n \geq 1$. Thus $\a$ does not have periodic reduction for any $\p \in S$.
\end{proof}

%%%%%%%%%%%%%%%%%%%%%%%%%%%%%%%%%%%%%%%%%%%%%%%%%%%%%%%%%%%%%%%%%%%%%%%%%%%%%%%%
\section{An Expansion Theorem 3}\label{sec:Generalized}%%%%%%%%%%%%%%%%%%%%%%%%%%%%%%%%%%%%%%%%%%
%%%%%%%%%%%%%%%%%%%%%%%%%%%%%%%%%%%%%%%%%%%%%%%%%%%%%%%%%%%%%%%%%%%%%%%%%%%%%%%%

In this section we expand Theorem 3 in the style of \cite{BGHKST} to include sets of points instead of a single $\a$ and $\b$. 

\begin{thm*}\label{Thm_Generalized}
 Let $F$ be a function field with a finite field of constants and let $\vp_1, \dots, \vp_g: \mathbb{P}^1\left(F \right) \rightarrow \mathbb{P}^1 \left( F \right)$ be a set of rational maps each with separable degree at least 2. 
 Let $\mathcal{A}_1, \dots, \mathcal{A}_g$ be finite subsets of $\mathbb{P}^1 \left( F \right) $ such that no $\mathcal{A}_i$ contain a $\vp_i$-preperiodic point and let $\mathcal{B}_1, \dots, \mathcal{B}_g$ be finite subsets of $\mathbb{P}^1_F$ such that 
 at most one $\mathcal{B}_i$ contains a point which is not $\vp_i$-preperiodic, and that there is at most one such point in that set.
 There is a set of primes $\mathcal{P}$ of $F$ with positive density and a positive integer $M$ such that for any $i = 1, \dots, g$, any $\a \in \mathcal{A}_i$, any $\b \in \mathcal{B}_i$, any $\p \in \mathcal{P}$, and every $n \geq M$,
 $$\vp^n \left( \a \right) \not\equiv \b \mod \p.$$
\end{thm*}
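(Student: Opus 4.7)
The plan is to run the argument of Lemma \ref{big} uniformly across the finitely many triples $(i,\a,\b)$ with $\a \in \mathcal{A}_i$ and $\b \in \mathcal{B}_i$, working in a single large Galois extension so that one application of Chebotarev's theorem handles every pair at once. Since no $\mathcal{A}_i$ contains a $\vp_i$-preperiodic point, each $\a$ is non-periodic; and for each triple one has $\b \notin \mathcal{O}_{\vp_i}^+(\a)$ (the one possible exceptional triple with a non-preperiodic $\b$ will be handled separately, as in the treatment of the set $\mathcal{V}$ at the end of Lemma \ref{big}). Applying the proof of Lemma \ref{big} to each triple produces a finite extension $E_{i,\a,\b}/F$ and an integer $M_{i,\a,\b}$ with the properties of Lemma \ref{Ramification}.

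Set $E$ to be the compositum of all the $E_{i,\a,\b}$, let $M := \max_{i,\a,\b} M_{i,\a,\b}$, and take $L$ to be the compositum over $E$ of the splitting fields of $\vp_i^M(x) - \b$ for each preperiodic $\b \in \mathcal{B}_i$. Write $G := \Gal(L/E)$. For each triple, the proof of Lemma \ref{big} isolates a finite set $W_{i,\a,\b} \subseteq \mathbb{P}^1(\overline{F})$ of ``bad'' preimages of $\b$ under $\vp_i^M$, such that the primes of $E$ whose Frobenius fixes no element of $W_{i,\a,\b}$ are exactly those for which no $n \geq M$ satisfies $\vp_i^n(\a) \equiv \b \pmod{\p}$. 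What remains is to exhibit a Frobenius class in $G$ that is simultaneously good for every triple, i.e., that fixes no point of $W := \bigcup_{i,\a,\b} W_{i,\a,\b}$; Theorem \ref{MurtyScherkCheb} followed by Lemma \ref{densityextension} will then produce $\mathcal{P}$.

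The main obstacle is verifying non-emptiness of the simultaneous good set $G \setminus \bigcup_{w \in W} \mathrm{Stab}_G(w)$. By a union bound, its complement has cardinality at most $\sum_{w \in W} |G|/[E(w):E]$, so it suffices to arrange $\sum_{w \in W} 1/[E(w):E] < 1$. Enlarging $M$ forces the splitting field degrees, and hence the generic orbit sizes $[E(w):E]$, to grow geometrically with $M$, while $|W|$ grows only polynomially in $M$ together with the sizes of the $\mathcal{A}_i$ and $\mathcal{B}_i$; the quantitative comparison amounts to the statement that a separable rational map of degree at least $2$ has most of its deep preimages of large Galois degree, which one extracts from the proof of Lemma \ref{Ramification}, enlarging $E$ further if necessary to absorb a bounded number of small-orbit exceptions. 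This counting step is the genuine new input beyond Lemma \ref{big}.

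Once non-emptiness is secured, Theorem \ref{MurtyScherkCheb} gives a positive-density set of primes of $E$ whose Frobenius lies in the simultaneous good set, and Lemma \ref{densityextension} transfers this to a positive-density subset $\mathcal{P}$ of $M_F^0$. Removing the finitely many primes that violate the desired conclusion for some $n < M$ (there are only finitely many since each $\vp_i^n(\a) - \b$ is a fixed nonzero element of $F$, reducing to zero modulo only finitely many primes) preserves positive density. The hypothesis that at most one $\mathcal{B}_i$ contains a non-preperiodic point, and at most one such point, is used precisely to keep the set of exceptional primes finite during this closing step, exactly paralleling the handling of $\mathcal{V}$ in the proof of Lemma \ref{big}.
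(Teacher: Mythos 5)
Your overall architecture (compositum of the fields from Lemma \ref{Ramification}, a single uniform $M$, one application of Chebotarev, then Lemma \ref{densityextension}) matches the intended proof, but the step you flag as the genuine new input --- proving non-emptiness of $G \setminus \bigcup_{w \in W} \mathrm{Stab}_G(w)$ by a union bound --- is where the argument breaks. If $L/E$ is Galois and $W \subseteq \mathbb{P}^1(L)$ is stable under $G = \Gal\left(L/E\right)$, then for each $w$ one has $[E(w):E] = |G|/|\mathrm{Stab}_G(w)|$, the size of the $G$-orbit of $w$; hence $\sum_{w \in W} 1/[E(w):E]$ equals the number of $G$-orbits in $W$, which is at least $1$ whenever $W \neq \emptyset$. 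The inequality $\sum_{w \in W} 1/[E(w):E] < 1$ you want to arrange is therefore impossible no matter how large $M$ is taken; equivalently $\sum_{w} |\mathrm{Stab}_G(w)| \geq |G|$ always, so the union bound never certifies a fixed-point-free element. Nor can this be repaired by a finer count: there are actions on disjoint unions of transitive sets in which every group element has a fixed point (for instance $S_3$ acting on the three cosets of a subgroup of order $2$ together with the two cosets of the subgroup of order $3$), so simultaneous fixed-point-freeness does not follow from orbit sizes being large.

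The mechanism the paper uses (following \cite{BGHKST}) is not an abstract non-vacuity argument but the exhibition of a witness: conditions (i) and (ii) of Lemma \ref{Ramification} --- and, for the sets $\mathcal{B}_i$, of Proposition \ref{Prop:Ramification_Sets} --- say that the original prime $\p$ is unramified in $L$ and that every relevant preimage $w$ generates a residue extension of degree greater than $1$ over $\p$, which is exactly the statement that the Frobenius class of $\p$ itself fixes none of the $w$. Chebotarev then spreads this single witness out to a positive-density set. Note also that passing from one $\b$ to a set $\mathcal{B}_i$ is not merely a union over triples: the relevant $W$ consists of those $\a$ with $\vp_i^m(\a) \in \mathcal{B}_i$ but $\vp_i^t(\a) \notin \mathcal{B}_i$ for all $t < m$, and controlling the interaction between different points of $\mathcal{B}_i$ (a preimage of $\b_1$ may pass through $\b_2$) is precisely the content of Proposition \ref{Prop:Ramification_Sets}, which your proposal does not supply.
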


\begin{rmk}
 Theorem \ref{Thm_Generalized} is both stronger and slightly weaker than Theorem \ref{ffintersection}. 
 Theorem \ref{Thm_Generalized} is stronger because it allows for sets of points and it makes no requirement about the intersection of orbits in $F$.
 It is weaker because it does not allow any of the $\a$ to be preperiodic, as Theorem \ref{ffintersection} does, and, since the wandering $\b$ may be in the orbit of an $\a$, we are not able to
 remove the possiblity that $M >0$. 
\end{rmk}

\begin{proof}
 The proof of Theorem 3.1 in \cite{BGHKST} works exactly for proving Theorem \ref{Thm_Generalized} if one replaces their use of 
 Lemma 4.3 from \cite{BGKT} with Theorem \ref{notperiodic} and
 replaces their Proposition 3.4 with Proposition \ref{Prop:Ramification_Sets} below.

\end{proof}

The following Proposition applies Lemma \ref{Ramification} to a set of points $\mathcal{B}$.

\begin{prop}\label{Prop:Ramification_Sets}
 Let $F$ be a function field with a finite field of constants, let $\tilde{\p}$ be a prime of $\mathfrak{o}_{\overline{F}}$, and let $\vp:\mathbb{P}^1 \rightarrow \mathbb{P}^1$ be a rational function
 defined over $F$ with separable degree at least 2 and with separable good reduction at $\p = \tilde{\p} \bigcap \mathfrak{o}_F$.
 Let $\mathcal{B} = \left\lbrace \b_1, \b_2, \dots, \b_n \right\rbrace \subset \mathbb{P}^1 \left( F \right)$ be such that for each $\b_i \in \mathcal{B}$ either
 \begin{itemize}
  \item if $\b_i$ is not periodic, then $\b_i$ is not periodic modulo $\p$; and
  \item if $\b_i$ is periodic, then $\vp \left( \b_i \right) = \b_i $ and the ramification index of $\vp$ at $\b_i$ is the same modulo $\p$ as over $F$.
 \end{itemize}
 
 There exists a finite extension $E $ of $F$ with the following property: for any extension $L$ of $E$, there is an integer $M \geq 0$ such that for all $m \geq M$
 and all $\a \in \mathbb{P}^1 \left( F \right)$ with $\vp^m \left( \a \right) \in \mathcal{B}$ but $\vp^t \left( \a \right) \notin \mathcal{B}$ for any $t < m$,
 \begin{enumerate}
  \item $\tau$ does not ramify over $\q$, and 
  \item $\left[\mathfrak{o}_{L \left( \a \right)} / \tau : \mathfrak{o}_L / \q \right] >1$,
 \end{enumerate}
where $\tau := \tilde{\p} \bigcap \mathfrak{o}_{L \left( \a \right)}$ and $\q := \tilde{\p} \bigcap \mathfrak{o}_L$.

\end{prop}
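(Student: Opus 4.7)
The plan is to prove the proposition by applying Lemma \ref{Ramification} separately to each point $\b_i \in \mathcal{B}$ and then combining the resulting extensions and integers. The conditions imposed on the elements of $\mathcal{B}$ in the statement are exactly those needed to invoke Lemma \ref{Ramification} for each $\b_i$ individually: a non-periodic $\b_i$ has non-periodic reduction modulo $\p$ (the hypothesis driving the non-periodic case of the proof of Lemma \ref{Ramification}), while a periodic $\b_i$ is fixed with unchanged ramification index modulo $\p$ (the hypothesis driving its periodic case).

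First I would invoke Lemma \ref{Ramification} with target $\b = \b_i$ for each $i = 1, \dots, n$ to produce a finite extension $E_i / F$ satisfying its conclusion for that single point, and then set $E := E_1 \cdots E_n$. This is a finite extension of $F$, and any finite extension $L/E$ is simultaneously an extension of every $E_i$. Given such an $L$, Lemma \ref{Ramification} supplies an integer $M_i$ for each $i$, and I would take $M := \max_i M_i$ (increased by $1$ if needed to absorb the single-step descent used in the periodic case of Lemma \ref{Ramification}).

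Now fix $m \geq M$ and $\a \in \mathbb{P}^1(\overline{F})$ with $\vp^m(\a) \in \mathcal{B}$ but $\vp^t(\a) \notin \mathcal{B}$ for all $t < m$. Writing $\vp^m(\a) = \b_i$, the set-avoidance hypothesis specializes to $\vp^t(\a) \neq \b_i$ for $0 \leq t < m$, which is precisely the minimality condition needed in the proof of Lemma \ref{Ramification}. Since $m \geq M \geq M_i$, applying Lemma \ref{Ramification} at $\b_i$ immediately yields both (1) and (2): $\tau$ is unramified over $\q$ and $[\mathfrak{o}_{L(\a)}/\tau : \mathfrak{o}_L/\q] > 1$.

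The main subtlety I anticipate is verifying that set-avoidance is enough in the periodic case. There, the proof of Lemma \ref{Ramification} passes from $\a$ to $\vp^{m-1}(\a) \in \vp^{-1}(\b_i) \setminus \{\b_i\}$ and applies the non-periodic argument to this preimage; one needs only that $\vp^{m-1}(\a) \neq \b_i$, which is immediate from $\vp^{m-1}(\a) \notin \mathcal{B}$. No interaction between distinct points $\b_i, \b_j \in \mathcal{B}$ is required, so the combination via compositum and maximum assembles cleanly into the desired single extension $E$ and single bound $M$.
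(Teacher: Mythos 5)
Your proposal is correct and follows essentially the same route as the paper, which simply defers to the corresponding argument in \cite{BGHKST} with Lemma \ref{Ramification} supplying the single-point input: apply that lemma to each $\b_i$ (the hypotheses on $\mathcal{B}$ are exactly those used in its non-periodic and periodic cases), take $E$ to be the compositum of the $E_i$ and $M$ the maximum of the $M_i$, and note that $\vp^t(\a) \notin \mathcal{B}$ for $t < m$ gives the minimality condition $\vp^t(\a) \neq \b_i$ needed in the periodic case. Your write-up is in fact more detailed than the paper's own proof.
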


\begin{proof}
 The proof is the same as that of Proposition 3.4 in \cite{BGHKST}. 
 The requirement of separable degree of at least 2 for $\vp$ is in place so that we can apply Lemma \ref{Ramification} in place of 
 their Lemma 3.3. 

\end{proof}

\section{A Hasse Principle for Periodic Points}\label{sec:Hasse}%%%%%%%%%%%%%%%%%%%%%%%%%%%%%%%%%%%%%%%%%%
%%%%%%%%%%%%%%%%%%%%%%%%%%%%%%%%%%%%%%%%%%%%%%%%%%%%%%%%%%%%%%%%%%%%%%%%%%%%%%%%

We now complete the proof of Theorem \ref{hasse}. 

\begin{proof}[Proof of Theorem \ref{hasse}]
It is left to show that if $\a \in F$ has periodic reduction for every prime $\p \in \mathcal{P}$ where $\mathcal{P}$ is a set of primes with positive natural density 1 then $\a $ is periodic in $F$. We will prove the contrapositive.

If $\a$ is not periodic then by Corollary \ref{strong_non_periodic} there is a set of primes $S$ with positive density for which $\a$ has non-periodic reduction. Thus it is impossible for $\a$ to have periodic reduction on a set of primes $\mathcal{P}$ with density 1.
\end{proof}

One might ask if the set $\mathcal{P}$ of primes with density 1 can be replaced by a set with a slightly smaller density? As stated the answer is no. Consider the family of polynomial maps $\vp_q: \mathbb{P}^1_\mathbb{Q} \rightarrow \mathbb{P}^1_\mathbb{Q}$ given by $\vp_q \left( x \right) = x^q+1$, where $q \in \mathbb{Z}$ is an odd prime. Let $\mathcal{P}_q$ be the set of primes $\mathcal{P}_q = \left\lbrace p \in \mathbb{Z} : p \not\equiv 1 \mod q  \right\rbrace$, and note that $\mathcal{P}_q$ has natural density $\dfrac{q-2}{q-1}$. 

For any prime $p \in \mathcal{P}_q$ the residue field $\mathbb{F}_p$ does not contain a $q$th root of unity and therefore $\vp_q \left(x \right) = x^q + 1$ is injective on the finite set $\mathbb{F}_p$. This means that $\overline{\vp}_q \left( x \right)$ is a permutation of $\mathbb{F}_p$. When a permutation of a set is iterated every point is periodic, in particular $\a = 0$ is periodic under the map $\overline{\vp_q } \left( x \right)$ for every prime $p \in \mathcal{P}_q$. In other words 0 has a periodic reduction for every prime $p \in \mathcal{P}_q$, but 0 is a wandering point of $\vp_q \left( x \right)$ in $\mathbb{Q}$. Taking $q$ to be sufficiently large we produce a set of primes, $\mathcal{P}_q$, with density arbitrarily close to 1 and a map $\vp_q \left( x \right)$ for which $\a = 0$ wanders but has periodic reduction on $\mathcal{P}$. 

We have demonstrated a wandering points with non-periodic reduction only on sets of arbitrarily small positive density, but we did it by using maps of very large degree. One could hope to recover a weaker statement by bounding the degree of the map $\vpx$. So we are left with the following question.

\begin{question}\label{q1}
For a global field $F$, and a rational map $\vp \in \Fx$ of degree $d$, is there a constant $C$ (depending on $d$) such that for any set of primes $\mathcal{P}$ with density $D$ satisfying $1 - D < C$ the following holds: If $\a \in F$ has periodic reduction for every $\p \in \mathcal{P}$ then must $\a$ be periodic in $F$?
\end{question}

We conclude with a heuristic as why such a $C$ should exist and propose a value. As usual let $F$ be a global field, and $\vp \in \Fx$ a rational map of degree $d \geq 2$. A wandering point $\a \in F$ will have periodic reduction on a set of primes $\mathcal{P}$ with a large density if that periodic reduction happens for some reason other than $\a$ being periodic, such as $\overline{\vp}$ inducing a permutation on the residue field at every $\p \in \mathcal{P}$. For a $F$ is a number field recall that for any prime $\p$ we denote by $F_\p$ the residue field of $\p$. If $F$ is a number field then Schur made the following conjecture in \cite{Schur}; if $\vpx \in F \left[x \right]$ is a polynomial which is bijective on $F_\p$ for infinitely many $\p$ then either $\vpx = ax^n + c$ or $\vpx = T_n \left( x \right)$ (the $n$-th Chebychev polynomial). Fried proved Schur's conjecture in \cite{Fried} and Guralnick, M{\"u}ller, and Saxl generalized the result to rational functions in \cite{GMS}, however by allowing all rational maps the possible types of maps which induce a permutation on infinitely many $F_\p$ must be expanded. These results imply that maps of the form $\vpx = x^d +c$ are in a sense `worst-possible'. However for each map in this family there is only one `bad' Chebotarev class of primes, the primes which are 1 modulo $q$. The density of these `bad' primes is $\dfrac{1}{ \phi \left( d \right)}$ where $\phi$ is the Euler $\phi$-function. We therefore propose that $C = \dfrac{1}{ \phi \left( d \right)}$ is the value which will give an affirmative answer to Question \ref{q1}.

\bibliographystyle{plain}
\bibliography{bib}

\begin{thebibliography}{10}

\bibitem{BGHKST}
Robert Benedetto, Dragos Ghioca, Benjamin Hutz, Pär Kurlberg, Thomas Scanlon,
  and Thomas Tucker.
\newblock Periods of rational maps modulo primes.
\newblock {\em Mathematische Annalen}, pages 1--24.
\newblock 10.1007/s00208-012-0799-8.

\bibitem{BGKT}
Robert~L. Benedetto, Dragos Ghioca, P{\"a}r Kurlberg, and Thomas~J. Tucker.
\newblock A case of the dynamical {M}ordell-{L}ang conjecture.
\newblock {\em Math. Ann.}, 352(1):1--26, 2012.

\bibitem{BombieriRunge}
Enrico Bombieri.
\newblock On {W}eil's ``th\'eor\`eme de d\'ecomposition''.
\newblock {\em Amer. J. Math.}, 105(2):295--308, 1983.

\bibitem{Bombieri}
Enrico Bombieri and Walter Gubler.
\newblock {\em Heights in {D}iophantine geometry}, volume~4 of {\em New
  Mathematical Monographs}.
\newblock Cambridge University Press, Cambridge, 2006.

\bibitem{CSTZ}
C.~Corvaja, V.~Sookdeo, T.~Tucker, and U.~Zannier.
\newblock Integral points in two-parameter orbits.
\newblock {\em arXiv:1201.1313v2 [math.NT]}.

\bibitem{Fried}
Michael Fried.
\newblock On a conjecture of {S}chur.
\newblock {\em Michigan Math. J.}, 17:41--55, 1970.

\bibitem{GMS}
Robert~M. Guralnick, Peter M{\"u}ller, and Jan Saxl.
\newblock The rational function analogue of a question of {S}chur and
  exceptionality of permutation representations.
\newblock {\em Mem. Amer. Math. Soc.}, 162(773):viii+79, 2003.

\bibitem{MurtyScherk}
Vijaya Kumar~Murty and John Scherk.
\newblock Effective versions of the {C}hebotarev density theorem for function
  fields.
\newblock {\em C. R. Acad. Sci. Paris S\'er. I Math.}, 319(6):523--528, 1994.

\bibitem{Levin}
Aaron Levin.
\newblock Variations on a theme of {R}unge: effective determination of integral
  points on certain varieties.
\newblock {\em J. Th\'eor. Nombres Bordeaux}, 20(2):385--417, 2008.

\bibitem{Runge}
Carl Runge.
\newblock Uber das produkt funf aufeinander folgender zahlen in einer
  arithmetischen reihe.
\newblock {\em J. Reine Angew. Math.}, 100:425--435, 1887.

\bibitem{Schur}
Issai Schur.
\newblock {\"U}ber den zusammenhang zwischen einem problem der zahlentheorie
  und einem satz über algebraische funktionen.
\newblock {\em S.-B. Preuss. Akad. Wiss. Berlin}, pages 123--134, 1923.

\bibitem{Silverman93}
Joseph~H. Silverman.
\newblock Integer points, {D}iophantine approximation, and iteration of
  rational maps.
\newblock {\em Duke Math. J.}, 71(3):793--829, 1993.

\bibitem{SilvermanDynamicsBook}
Joseph~H. Silverman.
\newblock {\em The arithmetic of dynamical systems}, volume 241 of {\em
  Graduate Texts in Mathematics}.
\newblock Springer, New York, 2007.

\end{thebibliography}

\end{document}